\documentclass[a4paper,11pt]{article}

\usepackage[utf8]{inputenc}
\usepackage[english]{babel}
\usepackage{amsmath,amssymb,amsthm,,amsbsy,bbm} 
\usepackage{hyperref,xcolor,fullpage}
\usepackage{dsfont}
\usepackage{faktor}
\usepackage{todonotes}
\usepackage{tikz}
\usepackage{xfrac}
\usepackage{float}
\tikzset{
	pics/carc/.style args={#1:#2:#3}{
		code={
			\draw[pic actions] (#1:#3) arc(#1:#2:#3);
		}
	}
}

\makeatletter \let\@fnsymbol\@arabic \makeatother 

\allowdisplaybreaks 
\usepackage[numbers]{natbib} \setlength{\bibsep}{4pt} 

\newtheorem{theorem}{Theorem}[section]
\newtheorem{lemma}[theorem]{Lemma}
\newtheorem{example}[theorem]{Example}

\newtheorem{corollary}[theorem]{Corollary}

\newtheorem{conjecture}[theorem]{Conjecture}

\newtheorem{proposition}[theorem]{Proposition}

\makeatletter
\let\save@mathaccent\mathaccent
\newcommand*\if@single[3]{
	\setbox0\hbox{${\mathaccent"0362{#1}}^H$}%
	\setbox2\hbox{${\mathaccent"0362{\kern0pt#1}}^H$}%
	\ifdim\ht0=\ht2 #3\else #2\fi }
\newcommand*\rel@kern[1]{\kern#1\dimexpr\macc@kerna}
\newcommand*\widebar[1]{\@ifnextchar^{{\wide@bar{#1}{0}}}{\wide@bar{#1}{1}}}
\newcommand*\wide@bar[2]{\if@single{#1}{\wide@bar@{#1}{#2}{1}}{\wide@bar@{#1}{#2}{2}}}
\newcommand*\wide@bar@[3]{
	\begingroup
	\def\mathaccent##1##2{
		\let\mathaccent\save@mathaccent
		\if#32 \let\macc@nucleus\first@char \fi
		\setbox\z@\hbox{$\macc@style{\macc@nucleus}_{}$}
		\setbox\tw@\hbox{$\macc@style{\macc@nucleus}{}_{}$}
		\dimen@\wd\tw@ \advance\dimen@-\wd\z@ \divide\dimen@ 3 \@tempdima\wd\tw@ \advance\@tempdima-\scriptspace \divide\@tempdima 10 \advance\dimen@-\@tempdima \ifdim\dimen@>\z@ \dimen@0pt \fi \rel@kern{0.6}\kern-\dimen@
		\if#31 \overline{\rel@kern{-0.6}\kern\dimen@\macc@nucleus\rel@kern{0.4}\kern\dimen@} \advance\dimen@0.4\dimexpr\macc@kerna \let\final@kern#2 \ifdim\dimen@<\z@ \let\final@kern1 \fi
		\if \final@kern1 \kern-\dimen@ \fi
		\else \overline{\rel@kern{-0.6}\kern\dimen@#1} \fi }
	\macc@depth\@ne	\let\math@bgroup\@empty \let\math@egroup\macc@set@skewchar 	\mathsurround\z@ \frozen@everymath{\mathgroup\macc@group\relax} 	 \macc@set@skewchar\relax \let\mathaccentV\macc@nested@a	\if#31 \macc@nested@a\relax111{#1} \else \def\gobble@till@marker##1\endmarker{} \futurelet\first@char\gobble@till@marker#1\endmarker \ifcat\noexpand\first@char A\else \def\first@char{} \fi \macc@nested@a\relax111{\first@char} \fi
	\endgroup }
\makeatother



\usepackage{mathrsfs}
\DeclareMathAlphabet{\mathpzc}{OT1}{pzc}{m}{it}
\usepackage{xifthen}
\newcommand{\fp}[1][]{ \ifthenelse{\isempty{#1}}{\mathpzc{p}}{\mathpzc{p}(#1)} }

\newcommand{\mS}{\mathcal{S}}

\newcommand{\mA}{\mathcal{A}}

\newcommand{\mR}{\mathcal{R}}

\newcommand{\NN}{\mathbb{N}}
\newcommand{\ZZ}{\mathbb{Z}}



\definecolor{darkred}{cmyk}{.3,.9,.80,.2}

\title{A step beyond Freiman's theorem \\ for set addition modulo a prime}
\author{
Pablo Candela\thanks{Universidad Aut{\'o}noma de Madrid, Ciudad Universitaria de Cantoblanco, Madrid 28049, Spain, {\tt pablo.candela@uam.es}.} \and
Oriol Serra\thanks{Universitat Polit{\'e}cnica de Catalunya, Department of Mathematics, 08034 Barcelona, Spain, {\tt oriol.serra@upc.edu}. Supported by the Spanish Ministerio de Econom{\'i}a y Competitividad projects MTM2014-54745-P and MDM-2014-0445.} \and
Christoph Spiegel\thanks{Universitat Polit{\'e}cnica de Catalunya and Barcelona Graduate School of Mathematics, Department of Mathematics, Edificio Omega, 08034 Barcelona, Spain, {\tt christoph.spiegel@upc.edu}. Supported by the Spanish Ministerio de Econom{\'i}a y Competitividad FPI grant under the project MTM2014-54745-P, the project MTM2017-82166-P and the María de Maetzu research grant MDM-2014-0445.}
}

\begin{document}
\maketitle

\begin{abstract}
	Freiman's 2.4-Theorem states that any set $A \subset \ZZ_p$ satisfying $|2A| \leq 2.4|A| - 3 $ and $|A| < p/35$ can be covered by an arithmetic progression of length at most $|2A| - |A| + 1$. A more general result of Green and Ruzsa implies that this covering property holds for any set satisfying $|2A| \leq 3|A| - 4$ as long as the rather strong density requirement $|A| < p/10^{215}$ is satisfied. We present a version of this statement that allows for sets satisfying $|2A| \leq 2.48|A| - 7$ with the more modest density requirement of $|A| < p/10^{10}$.
\end{abstract}

\section{Introduction}

Given a set $A \subset G$ in some additive group $G$, we define its \emph{sumset} as 
\begin{equation}
	A + A = \{ a+a' : a,a' \in A \} \subset G.
\end{equation} 
We will often denote this sumset by $2A$, which should not be confused with the dilate $2 \cdot A = \{2a : a \in A\}$. When dealing with inverse questions in additive combinatorics, one is typically interested in understanding the structure of a set $A$ for which only some additive property is known, e.g. that the so-called \emph{doubling} $|2A|/|A|$ is small. One of the most important results in this area is Freiman's Theorem, which states that any finite set of integers can be efficiently covered by a generalized arithmetic progression, where the size and the dimension of the  progression depend only on the doubling. The bounds for this result were later improved and the ambient group generalized to many different contexts, see for example~\cite{Chang02,GR07,Ruzsa94,Sanders08,Schoen11}.

In the more specific case of finite sets of integers $A \subset \ZZ$ with \emph{very} small sumsets, that is $|2A| \leq 3|A| - 4$, Freiman showed that in fact $A$ can be covered by a normal (i.e. 1-dimensional) arithmetic progression of length at most $|2A| - |A| + 1$. This result is easily seen to be tight. An equivalent statement in the cyclic group $\ZZ_p$, where $p$ is a prime, is widely believed to hold as well, assuming certain modest restrictions regarding the cardinality of $A$ with respect to $p$. However, such a statement has turned out to be more difficult to prove.

It was Freiman himself who first showed that the same covering property holds for any set $A \subset \ZZ_p$ satisfying $|2A| \leq 2.4|A| - 3$ and $|A| < p/35$, see~\cite{Freiman61}. R{\o}dseth~\cite{Rodseth06} later showed that the density requirement can be weakened to $p/10.7$. A more general result of Green and Ruzsa~\cite{GR06} immediately gives the same conclusion for all sets satisfying $|2A| \leq 3|A| - 4$ as long as they also satisfy the rather strong density requirement $|A| < p/10^{215}$. The second author and Z{\'e}mor obtained a result with the same covering conclusion and no  restrictions regarding the size of $|A|$ itself, but assuming that $|2A| \leq (2+\varepsilon)|A|$ with $\varepsilon<10^{-4}$, see~\cite{SZ09}.

We present a version of this statement that improves upon the constant $2.4$ present in the results of Freiman and R\o dseth, without requiring quite as strong a density condition as in the result of Green and Ruzsa.

\begin{theorem} \label{thm:main}
Let $A \subset \ZZ_p$ satisfy $|2A| \leq 2.48|A| - 7$ and $|A| < p/10^{10}$. Then there is an arithmetic progression $P\subset \ZZ_p$ such that $A\subset P$ and $|P|\leq |2A| - |A| + 1$.
\end{theorem}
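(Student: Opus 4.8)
The plan is to run a "rectification" strategy: show that under the hypotheses, $A$ behaves enough like a set of integers that Freiman's $3k-4$ theorem in $\ZZ$ can be applied, and then transfer the resulting progression back to $\ZZ_p$. Concretely, I would first use the small-doubling hypothesis $|2A| \leq 2.48|A| - 7$ together with the density bound $|A| < p/10^{10}$ to produce a Freiman isomorphism (of order $2$) between $A$ and a subset $A'$ of $\ZZ$. The standard tool here is a Fourier-analytic or combinatorial dilation argument: by pigeonhole there is a nonzero dilate $t \cdot A$ that is contained in a short interval of $\ZZ_p$, of length $< p/2$ say, so that $t\cdot A$ may be identified with a genuine set of integers. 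The quantitative cost of this step is exactly what forces a density requirement, and the art is to keep that requirement down to $10^{-10}$ rather than the $10^{-215}$ of Green--Ruzsa; I would aim to combine R\o{}dseth's Fourier-type argument with the extra room afforded by the slightly larger doubling constant, keeping careful track of constants throughout.

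Once $A$ is identified with $A' \subset \ZZ$ with $|2A'| = |2A|$ (this is where the Freiman $2$-isomorphism is used: sums behave identically), I would check that the integer version of the $3k-4$ hypothesis is satisfied, namely $|2A'| \leq 3|A'| - 4$, which follows since $2.48|A| - 7 \leq 3|A| - 4$ for all $|A| \geq 2$. Then Freiman's theorem over $\ZZ$ gives an arithmetic progression $P' \subset \ZZ$ with $A' \subset P'$ and $|P'| \leq |2A'| - |A'| + 1 = |2A| - |A| + 1$. Pulling back through the dilation (multiplying $P'$ by $t^{-1}$ in $\ZZ_p$, which maps arithmetic progressions to arithmetic progressions and does not change lengths), one obtains the desired progression $P \subset \ZZ_p$ covering $A$ with $|P| \leq |2A| - |A| + 1$. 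The final step is only valid provided $|P'| < p$, so that the reduction mod $p$ is injective on $P'$; but $|P'| \leq |2A| - |A| + 1 \leq 1.48|A| - 6 < 1.48\, p/10^{10} < p$, so this causes no trouble.

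The main obstacle, and the technical heart of the argument, is the rectification step: showing that a set in $\ZZ_p$ with doubling up to $2.48$ can be dilated into a short interval, with an explicit and not-too-restrictive density threshold. Naively one would invoke a black-box rectification principle (e.g. from \cite{GR06}), but that yields the catastrophic $10^{-215}$; to reach $10^{-10}$ one must instead argue more hands-on. I anticipate the key lemma will be of the form: if $|2A| \leq 2.48|A|-7$ and $|A| < p/10^{10}$, then some dilate of $A$ lies in an interval of length at most $(|2A|-|A|+1)$ — essentially proving a weak covering directly by a combinatorial/Fourier argument (bounding $\sum_{r \neq 0} |\widehat{A}(r)|^2$ or using an isoperimetric/Kneser-type inequality in $\ZZ_p$ à la Serra--Z\'emor), and only then upgrading it to the sharp bound via the integer $3k-4$ theorem. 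Getting the constants in this lemma to cooperate — in particular balancing the doubling constant $2.48$ against the exponent $10$ in the density — is where most of the real work will go, and where I would expect to spend the bulk of the proof.
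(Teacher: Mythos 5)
There is a genuine gap, and it sits exactly where you park the difficulty: your ``key lemma'' asserting that under $|2A|\leq 2.48|A|-7$ and $|A|<p/10^{10}$ \emph{all} of $A$ can be dilated into an interval of length $<p/2$ is not proved, and it is essentially equivalent to the theorem itself. No pigeonhole argument gives this: pigeonhole (or the Fourier concentration argument of Freiman, even sharpened via Lev) only places a \emph{proportion} of $A$ in a half-interval --- quantitatively, with doubling $2.48$ one gets roughly $\bigl(1+\sqrt{1/2.48}\bigr)|A|/2\approx 0.8175\,|A|$ elements, not all of $A$. The paper's proof is organized precisely around this obstruction: it rectifies only a part $\mA_1$ with $|\mA_1|>0.8175|A|$, observes that this part may have doubling as large as $3.04|\mA_1|-7$ (so Freiman's $3k-4$ theorem in $\ZZ$ does \emph{not} apply to it directly, contrary to the arithmetic you run for the full set), and then splits according to the Freiman dimension of the rectified part. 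The $1$-dimensional case needs a new covering result (Proposition~\ref{prop:4k-8_partial}, proved via the Deshouillers--Freiman theorem in $\ZZ_m$), the $2$-dimensional case needs Freiman's $\sfrac{10}{3}|A|$ theorem (Corollary~\ref{cor:Freiman_2lines}), and only after a positional case analysis in $\ZZ_p$ --- locating the unrectified remainder $\mR$ relative to the two progressions covering $\mA_1$ and dilating by $2$ or $3$ --- does one conclude that the whole of $A$ is rectifiable, at which point the $3|A|-4$ theorem finishes. Your proposal skips all of this by assuming the conclusion of that analysis.

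A secondary, related overreach: you suggest reaching the full-rectification lemma by ``combining R\o{}dseth's Fourier-type argument with the extra room afforded by the slightly larger doubling constant.'' R\o{}dseth's and Freiman's arguments are tailored to doubling below $2.4$, where the rectified fraction is large enough that the rectified part still has doubling under $3$ and the classical $3k-4$ theorem applies; pushing the doubling constant past $2.4$ is exactly what breaks that mechanism and what forces the new ingredients above (dimension considerations and coverings for doubling slightly beyond $3$). So the constant-tracking you defer is not bookkeeping --- it is the point at which the proposed route stops working and a genuinely different argument is needed. The final transfer step in your write-up (pulling back a progression of length $\leq 1.48|A|-6<p$ through a dilation) is fine, but it is the easy part.
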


Similar to both the result of Freiman and that of Green and Ruzsa, the proof of this statement uses a Fourier-analytic rectification argument that allows one to transplant a significant part of the set into the integers, where the corresponding covering result can be applied. Unlike the result of Freiman however, we will allow the doubling of that part to go past the $3|A|-4$ barrier in the integers. To do so, we will use a result of Freiman and Deshoulliers to prove a covering result for sets of integers with doubling slightly above that barrier. This also implies that, unlike in the original approach, we have to take the additive dimension of our sets into consideration.


We believe that the ideas in this paper are capable of yielding significantly better constants than the ones obtained here. The biggest obstacle in improving either the density requirement or the constant $2.48$ will be the relatively poor covering given by Proposition~\ref{prop:4k-8_partial}. A conjecture of Freiman (see for example~\cite{FS16}) claims that in fact one should be able to replace the constant $10^9$ with $4$ in that proposition, resulting in a significant improvement of the density requirement of our main statement. So far only very little has been proven in that direction. Freiman himself solved the case $3|A|-3$, and Jin~\cite{Jin07} obtained a result in the case $(3+\varepsilon)|A|$ for some undetermined $\varepsilon > 0$.

\bigskip

\noindent {\bf Outline. } In \emph{Section~\ref{sec:preliminaries}} we will introduce some tools required in order to prove Theorem~\ref{thm:main}. We will first state and prove a covering result for sets of integers having doubling slightly above the $3|A|-4$ threshold. We will then give an overview of some well-established rectification principles. Using these tools we will then prove Theorem~\ref{thm:main} in \emph{Section~\ref{sec:proof}}. We make some concluding remarks in \emph{Section~\ref{sec:remarks}}.

\section{Preliminaries} \label{sec:preliminaries}

Let us formally define some common notions and concepts. We say that a set $A \subset \ZZ$ is in \emph{normal form} if $A \subset \NN_0$, $0 \in A$ and $\gcd(A) = 1$. Note that one can easily put any finite set $A \subset \ZZ$ into normal form without affecting its cardinality or additive properties, by setting $A' = (A-\min(A))/\gcd(A-\min(A))$. If a set $A$ is covered by an arithmetic progression of length $k$ then it follows that the normal form $A'$ of that set satisfies $\max(A') \leq k-1$.

Let $A$ and $B$ be two subsets of some (not necessarily identical) abelian groups. A bijection $f: A \to B$ is said to be a \emph{Freiman isomorphism of order $k$}, or \emph{$F_k$-isomorphism} for short, if for any elements $a_1, \dots, a_k, a_1', \dots, a_k' \in A$ we have 
\begin{equation}
	a_1 + \dots + a_k = a_1' + \dots + a_k' \quad \Leftrightarrow \quad f(a_1) + \dots + f(a_k) = f(a_1') + \dots + f(a_k').
\end{equation}
One can think of this as a generalization of a group isomorphism for which only operations of depth at most $k$ are required to be preserved. A subset $A$ of an arbitrary abelian group is said to be \emph{rectifiable of order $k$} if it is $F_k$-isomorphic to a set of integers. Note that we will generally only be interested in the case $k = 2$, where we will just use the term \emph{rectifiable}. Lastly, the \emph{additive dimension $\dim(A)$} of a set of integers $A \subset \ZZ$ is defined to be the largest $s \in \NN$ for which $A$ is $F_2$-isomorphic to some subset of $\ZZ^s$ that is not contained in a hyperplane.

In what follows we will usually use the notation $\mA$ to refer to sets in some cyclic group $\ZZ_m$ and the usual notation $A$ to refer to sets in the integers. Often $\mA$ will refer to the canonical projection from $\ZZ$ to some $\ZZ_m$ of some $A \subset \ZZ$.

\subsection{Covering Statements}

Deshouillers and Freiman stated the following result regarding covering properties of subsets of $\ZZ_m$ with very small sumset. Note that -- unlike the main statement this paper is interested in  -- this result concerns arbitrary $\ZZ_m$, that is, the integer $m$ does not have to be prime. This explains the weaker bounds and more complex statement.

\begin{theorem}[Deshouillers and Freiman~\cite{DF03}] \label{thm:DeshouillersFreiman}
	For any set $\mA \subset \ZZ_n$ satisfying $|2\mA| \leq 2.04|\mA|$ and $|\mA| \leq n/10^9$ there exists a proper subgroup $H < \ZZ_n$ such that the following holds:
	\begin{enumerate}
		\item	If $\mA$ is included in one coset of $H$ then $|\mA| > |H| / 10^9$.
		\item	If $\mA$ meets exactly $2$ or at least $4$ cosets of $H$ then it is included in an $\ell$-term arithmetic progression of cosets of $H$ where 
			\begin{equation}\label{eq:DF1}
				(\ell-1)|H| \leq |2\mA| - |\mA|.
			\end{equation}
		\item	If $\mA$ meets exactly three cosets of $H$ then it is included in an $\ell$-term arithmetic progression of cosets of $H$ where 
			\begin{equation}\label{eq:DF2}
				(\min(\ell,4)-1)|H| \leq |2\mA| - |\mA|.
			\end{equation}
	\end{enumerate}
	Furthermore, if $\ell \geq 2$ then there exists a coset of $H$ containing $\sfrac{2}{3} \, |H|$ elements from $\mA$.
\end{theorem}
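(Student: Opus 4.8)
The plan is to combine Kneser's theorem with a rectification argument, in the spirit of the proof of Freiman's $2.4$-theorem in $\ZZ_p$, but with extra care devoted to the subgroup obstructions that arise when $n$ is composite. First normalise the position of $\mA$: let $H=\{x\in\ZZ_n:x+2\mA=2\mA\}$ be the stabiliser of the sumset, which is a proper subgroup since $|H|$ divides $|2\mA|\le 2.04|\mA|<n$. Then $2\mA$ is a union of cosets of $H$, so, writing $N=n/|H|$ and letting $\overline{\mA}\subseteq\ZZ_n/H\cong\ZZ_N$ be the image of $\mA$, we have $|2\mA|=|H|\,|2\overline{\mA}|$, and $2\overline{\mA}$ has trivial stabiliser in $\ZZ_N$. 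If $\mA$ lies in a single coset $c+H$, then $2\mA-2c$ is a nonempty $H$-periodic subset of $H$, hence equals $H$, so $|2\mA|=|H|$ and the doubling hypothesis gives $|\mA|\ge|H|/2.04>|H|/10^9$, which is conclusion~(1). So assume $k:=|\overline{\mA}|\ge 2$. Kneser in $\ZZ_n$ gives $|2\mA|\ge(2k-1)|H|$, which with $|2\mA|\le 2.04|\mA|\le 2.04\,n/10^9$ forces $k$ to be tiny relative to $N$ (roughly $k<1.4\,N/10^9$; in particular $N$ must be large for the multi-coset case to occur).

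The core of the argument concerns $\overline{\mA}$: a sparse subset of $\ZZ_N$ with $|2\overline{\mA}|\le 2.04|\overline{\mA}|$ and, by Kneser, $2k-1\le|2\overline{\mA}|$. It suffices to show $\overline{\mA}$ lies in an $\ell$-term arithmetic progression with $\ell\le|2\overline{\mA}|-k+1$: since $|\mA|\le k|H|$, this gives $(\ell-1)|H|\le(|2\overline{\mA}|-k)|H|=|2\mA|-k|H|\le|2\mA|-|\mA|$, which is exactly \eqref{eq:DF1}. When $k\ge 4$ we have $|2\overline{\mA}|\le\lfloor 2.04k\rfloor\le 3k-4$, so as soon as $\overline{\mA}$ is known to be rectifiable we transplant it to $A\subseteq\ZZ$ with $|2A|=|2\overline{\mA}|$ and apply Freiman's $3|A|-4$ theorem to obtain such a progression in $\ZZ$; since $\ell$ is far below $N$, it pulls back to a progression of $H$-cosets in $\ZZ_n$. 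Rectifiability is where the constant enters: it is supplied by the sparsity together with the near-minimal doubling, via a rectification lemma of the kind used in Freiman's $2.4$-theorem, whose density threshold for doubling below $2.04$ is what produces $10^9$.

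Two points are delicate: the small values of $k$, and a genuine failure of rectification due to torsion. For $k=2$ the trivial sumset-stabiliser forces $|2\overline{\mA}|=3$ and $\overline{\mA}$ is a $2$-term progression; for $k=3$ one inspects the (short) list of possibilities for $|2\overline{\mA}|\in\{5,6\}$. The subtlety is that even a sparse set with doubling $\le 2.04$ and trivial sumset-stabiliser need not be rectifiable --- for instance $\{0,\tfrac N2,w\}$ satisfies $2\cdot 0=2\cdot\tfrac N2$, a relation with no integer model, and indeed need not lie in a short progression of $\ZZ_N$. This is precisely why the statement only asserts the existence of \emph{some} proper subgroup: one enlarges $H$ to absorb the offending $2$- or $3$-torsion element (passing to $H+\langle t\rangle$, which keeps $H$ proper and reduces $k$), which restores the progression structure and, for $k=3$, accounts for the weaker form \eqref{eq:DF2} and the appearance of $\min(\ell,4)$ --- when $|2\overline{\mA}|=6$ one simply notes $3|H|\le|2\mA|-|\mA|$ holds for free, while when $|2\overline{\mA}|=5$ the set is either a $3$-term progression or is handled by absorbing a $2$-torsion element. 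Finally, the ``furthermore'' clause is pure averaging: in the multi-coset case $|\mA|\ge|2\mA|/2.04=|H|\,|2\overline{\mA}|/2.04\ge|H|(2k-1)/2.04$, so the densest of the $k$ cosets meeting $\mA$ contains at least $|H|(2-1/k)/2.04>\tfrac23|H|$ elements, using $k\ge 2$.

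The step I expect to be the main obstacle is the rectification with an honest constant $10^9$ for doubling up to $2.04$ in a possibly composite setting, together with the entangled bookkeeping needed to pin down the correct subgroup $H$: ruling out or absorbing the small torsion configurations in the quotient and carrying out the low-$k$ analysis so that all three conclusions hold simultaneously with the stated inequalities.
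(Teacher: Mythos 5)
This theorem is not proved in the paper at all: it is quoted verbatim from Deshouillers and Freiman \cite{DF03}, so there is no in-paper argument to compare against, and your proposal has to stand on its own as a proof of a substantial external result. It does not. The pieces that are genuinely easy are handled correctly: taking $H$ to be the Kneser stabiliser of $2\mA$, conclusion (1) for a single coset, the arithmetic that converts a covering of the quotient set $\overline{\mA}\subset\ZZ_N$ by an $\ell$-term progression with $\ell\le|2\overline{\mA}|-k+1$ into \eqref{eq:DF1}, and the averaging argument for the ``furthermore'' clause (which works because Kneser gives $|2\mA|\ge(2k-1)|H|$ for this particular $H$). But the entire weight of the theorem rests on the step you defer to ``a rectification lemma of the kind used in Freiman's $2.4$-theorem'': that a subset of a \emph{composite} cyclic group $\ZZ_N$ of density at most about $10^{-9}$, with doubling at most $2.04$ and trivial sumset stabiliser, can be rectified (equivalently, covered by a suitably short progression). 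No such off-the-shelf lemma exists. Freiman's rectification argument is specific to prime modulus: the large Fourier coefficient argument needs the frequency $d$ to be invertible so that dilation by $d$ is a bijection, and when $\gcd(d,N)>1$ the ``concentration'' one obtains is concentration along cosets of a nontrivial subgroup rather than in an interval --- which is exactly the phenomenon the Deshouillers--Freiman theorem is describing. In other words, the step you single out as ``the main obstacle'' is not a technical refinement to be supplied later; it is essentially the theorem itself, and assuming it makes the argument circular. The actual proof in \cite{DF03} is a long induction with extensive case analysis precisely because one must iterate through the subgroup structure while controlling how density and doubling degrade in the quotients.

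A secondary problem is the ``absorb the torsion into $H$'' device. Once you replace the Kneser stabiliser by a larger subgroup, every inequality you derived from Kneser's theorem for the original $H$ --- the lower bound $|2\mA|\ge(2k-1)|H|$, conclusion (1), the $\sfrac{2}{3}|H|$ coset, and the bookkeeping behind \eqref{eq:DF1} and \eqref{eq:DF2} --- has to be re-established for the new subgroup; you verify this only in the toy cases $k\le 3$, whereas non-rectifiable configurations in composite $\ZZ_N$ are not confined to a single $2$- or $3$-torsion element sitting inside an otherwise rectifiable set, and for general $k$ the quotient by the enlarged subgroup can again have large doubling relative to its new density, forcing exactly the kind of iteration your sketch does not control. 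So the proposal is a plausible reduction of the theorem to its hard core, but the hard core is missing.
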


We will also need the following straightforward observation in order to distinguish between integer sets of different additive dimension.

\begin{lemma} \label{lemma:higherdim}
	Let $A \subset \ZZ$ be given in normal form with $|A| \geq 3$ and $m > 1$ such that $m \mid \max(A)$. If the canonical projection of $A$ into $\ZZ_m$ is rectifiable, then $\dim(A) \geq 2$.
\end{lemma}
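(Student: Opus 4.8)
The goal is to produce, from the assumed rectification of $\mA := \pi(A)$ in $\ZZ_m$ (with $\pi\colon\ZZ\to\ZZ_m$ the canonical projection), an explicit $F_2$-isomorphism from $A$ onto a subset of $\ZZ^2$ that affinely spans, i.e. lies in no hyperplane (affine line) of $\ZZ^2$; by the definition of additive dimension this gives $\dim(A)\ge 2$ (and $\dim(A)\ge 1$ is clear since $|A|\ge 3$). So fix an $F_2$-isomorphism $g\colon\mA\to B$ with $B\subset\ZZ$, and define $h\colon A\to\ZZ^2$ by $h(a)=\big(a,\,g(\pi(a))\big)$. The plan is: (i) show $h$ is an $F_2$-isomorphism onto its image $C:=h(A)$; and (ii) show $C$ is contained in no affine line, using the normal-form hypotheses.

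For (i): $h$ is injective because its first coordinate is. For the defining bi-implication, the forward direction is immediate: if $h(a_1)+h(a_2)=h(a_3)+h(a_4)$, then comparing first coordinates already gives $a_1+a_2=a_3+a_4$. For the reverse direction, suppose $a_1+a_2=a_3+a_4$ in $\ZZ$; reducing modulo $m$ gives $\pi(a_1)+\pi(a_2)=\pi(a_3)+\pi(a_4)$ in $\ZZ_m$, and since $g$ is an $F_2$-isomorphism on $\mA$ this forces $g(\pi(a_1))+g(\pi(a_2))=g(\pi(a_3))+g(\pi(a_4))$, whence $h(a_1)+h(a_2)=h(a_3)+h(a_4)$. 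Thus $A$ is $F_2$-isomorphic to $C$.

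For (ii): since $|A|\ge 3$ and the first coordinates of $C$ are precisely the distinct elements of $A$, any line containing $C$ is non-vertical, hence is the graph of an affine map $a\mapsto \lambda a+\mu$; so if $C$ lay in a line we would have $g(\pi(a))=\lambda a+\mu$ for all $a\in A$. This is where I use the normal form: $0\in A$ and $\max(A)\in A$, and $m\mid\max(A)$, so $\pi(0)=\pi(\max(A))=0$; evaluating the affine relation at $a=0$ and at $a=\max(A)$ and subtracting gives $\lambda\max(A)=0$, so $\lambda=0$ (as $\max(A)>0$). Then $g\circ\pi$ is constant on $A$, and since $g$ is a bijection, $\pi$ is constant on $A$; as $0\in A$ this means $m\mid a$ for every $a\in A$, contradicting $\gcd(A)=1$ together with $m>1$. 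Hence $C$ spans affinely and $\dim(A)\ge 2$.

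I expect the only genuinely subtle choice to be the definition of $h$. A first instinct is to divide out $m$ and use the lift $a\mapsto\big((a-s(a))/m,\,g(\pi(a))\big)$, where $s(a)\in\{0,\dots,m-1\}$ represents $\pi(a)$; but this is not an $F_2$-isomorphism in general, since the sums $s(a_i)+s(a_j)$ may or may not carry past $m$, which breaks additive relations. Keeping the full integer $a$ as the first coordinate trivializes the reverse implication in (i) and removes all such bookkeeping at no cost, after which step (ii) is a short computation — and it is exactly there that all three hypotheses $m\mid\max(A)$, $\gcd(A)=1$, and $m>1$ are consumed.
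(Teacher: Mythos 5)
Your proof is correct and follows essentially the same route as the paper: both map $A$ via $a \mapsto (a, g(\pi(a)))$ into $\ZZ^2$, verify this is an $F_2$-isomorphism by reducing integer additive relations mod $m$, and then use $0,\max(A)\in A$, $m \mid \max(A)$ and $\gcd(A)=1$ to rule out containment in a line. The only cosmetic difference is in the last step, where the paper normalizes $f(0)=0$ and exhibits three explicitly non-collinear points, while you show any covering line would have to be horizontal and contradict $\gcd(A)=1$ via injectivity of $g$.
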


\begin{proof}
	Let $\varphi: \ZZ \to \ZZ_{m}$ denote the canonical projection. Note that $\{ (a, \varphi(a)) :  a \in A \} \subset \ZZ \times \ZZ_{m}$ is $F_2$-isomorphic to $A$, since for any $a_1,a_2,a_3,a_4 \in A$ we have $a_1 + a_2 = a_3 + a_4$ if and only if $(a_1,\varphi(a_1)) + (a_2,\varphi(a_2)) = (a_3,\varphi(a_3)) + (a_4,\varphi(a_4))$. As $\mA = \varphi(A)$ is rectifiable, there exists some $F_2$-isomorphism $f$ mapping $\mA$ into the integers. By the same argument as before, it follows that $\{ (a, \varphi(a)) :  a \in A \}$ and hence also $A$ is $F_2$-isomorphic to $\{ (a, f(\varphi(a))) :  a \in A \} \subset \ZZ^2$. We may without loss of generality assume that $f(0) = 0$ and note that since $A$ is in normal form and $|A| \geq 3$, there must exist some $a' \in A$ such that $\varphi(a') \neq 0$ and hence also $f(\varphi(a')) \neq 0$. Using the requirement that $m \mid \max(A)$, we observe that the three points $(0,f(\varphi(0))) = (0,0)$,  $(\max(A),f(\varphi(\max(A))) = (\max(A),0)$ and $(a',f(\varphi(a'))) \neq (a',0)$ do not lie in a hyperplane of $\ZZ^2$ and therefore $\dim(A) \geq 2$ as desired.
\end{proof}


We can now state and prove the main new ingredient needed for the proof of Theorem~\ref{thm:main}. It should be noted that the proof has some slight similarities with the proof of Freiman's $3|A|-4$ Theorem in the integers by modular reduction (see \cite{LS95}), but there is a new component in the argument here, consisting of taking into account the Freiman dimension of the set.

\begin{proposition} \label{prop:4k-8_partial}
	Any $1$-dimensional set $A \subset \ZZ$ satisfying $|2A| \leq 3.04 |A| - 3$ can be covered by an arithmetic progression of length at most $10^9 \, |A|$.
\end{proposition}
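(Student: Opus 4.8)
The plan is to use a modular-reduction argument of the type Lev and Smeliansky~\cite{LS95} use to prove Freiman's $3|A|-4$ theorem, feeding the reduced set into Theorem~\ref{thm:DeshouillersFreiman} and then invoking Lemma~\ref{lemma:higherdim} together with the hypothesis $\dim(A)=1$ to finish by contradiction.

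We may assume $A$ is in normal form, since this changes neither $|A|$, nor $\dim(A)$, nor the least length of an arithmetic progression covering $A$. Write $k=|A|$ and $n=\max(A)$; then $A\subseteq\{0,1,\dots,n\}$, so it suffices to prove $n<10^9k$. The cases $k\le2$ are immediate, so assume $k\ge3$ and, for contradiction, $n\ge10^9k$. The key step is to reduce modulo $n$: put $\mA:=A\bmod n\subseteq\ZZ_n$. As $0$ and $n$ are the only elements of $A$ congruent mod $n$, we have $|\mA|=k-1$, and I would then establish the elementary bound $|2\mA|\le|2A|-|A|$ by estimating the defect of the reduction $2A\to2\mA$ from below: the fibre over $0$ contains $\{0,n,2n\}$, and for each of the $k-2$ elements $a\in A\cap(0,n)$ the fibre over $a\bmod n$ contains both $a$ and $a+n$, so the defect is at least $2+(k-2)=k$. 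Hence $|2\mA|\le(3.04k-3)-k<2.04(k-1)=2.04|\mA|$, and since $|\mA|=k-1\le n/10^9$, Theorem~\ref{thm:DeshouillersFreiman} applies to $\mA$ in $\ZZ_n$.

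Let $H<\ZZ_n$ be the resulting proper subgroup, $d:=|H|$ and $m:=n/d\ge2$ (so $H$ is the subgroup $m\ZZ_n$). Since $0\in\mA$, the possibility that $\mA$ lies in a single coset of $H$ would force $\mA\subseteq H$, hence $m\mid a$ for every $a\in A$, contradicting $\gcd(A)=1$; so we are in case (2) or (3), meaning $\mA$ meets exactly $\ell\ge2$ cosets of $H$, is contained in an $\ell$-term progression of cosets, and in either case $(\ell-1)d\le|2\mA|-|\mA|\le(|2A|-|A|)-(|A|-1)\le1.04(k-1)$; as $k\le n/10^9$ this gives $2(\ell-1)d<n$, i.e.\ $2(\ell-1)<m$. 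Reducing once more, $\mA':=A\bmod m$ is the image of $\mA$ under $\ZZ_n\to\ZZ_n/H\cong\ZZ_m$, so $|\mA'|=\ell$ (that many cosets are met) and $\mA'$ is contained in, hence equal to, an $\ell$-term progression in $\ZZ_m$; its common difference is coprime to $m$ (otherwise $\mA'\ni0$ would lie in a proper subgroup of $\ZZ_m$, again contradicting $\gcd(A)=1$), so after multiplying by its inverse $\mA'$ becomes an interval of $\ell$ consecutive residues modulo $m$, which because $2(\ell-1)<m$ is $F_2$-isomorphic to an interval of integers and hence rectifiable. Lemma~\ref{lemma:higherdim} (applicable as $m\mid\max(A)$, $m>1$, $|A|\ge3$) now gives $\dim(A)\ge2$, contradicting $\dim(A)=1$. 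Therefore $n<10^9k$, and $\{0,1,\dots,n\}$ is the required progression.

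I expect the decisive point — and the source of the constant $3.04$ — to be the inequality $|2\mA|\le|2A|-|A|$, which is exactly what pushes the doubling of the reduced set below the threshold $2.04$ demanded by Theorem~\ref{thm:DeshouillersFreiman}; the remaining steps are essentially bookkeeping, once one observes that the estimate $(\ell-1)|H|\le|2\mA|-|\mA|$ in that theorem keeps the progression of cosets short enough that the reduction modulo $m$ does not wrap around, and that $\gcd(A)=1$ uniformly disposes of case (1) and of every degenerate sub-case in which $\mA$ or $\mA'$ would collapse into a proper subgroup.
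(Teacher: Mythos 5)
Your reduction mod $n=\max(A)$, the defect count giving $|2\mA|\le|2A|-|A|\le 2.04|\mA|$, the exclusion of case (1) via $\gcd(A)=1$, and the treatment of case (2) (bounding $\ell$, passing to $\ZZ_m$ with $m=n/|H|$, using normality to make the common difference invertible, rectifying the short interval, and applying Lemma~\ref{lemma:higherdim}) all match the paper's argument. But there is a genuine gap at the point where you write that ``in either case $(\ell-1)d\le|2\mA|-|\mA|$'': Theorem~\ref{thm:DeshouillersFreiman} gives that inequality only in case (2). When $\mA$ meets exactly three cosets of $H$, the theorem only guarantees $(\min(\ell,4)-1)|H|\le|2\mA|-|\mA|$, which for $\ell\ge5$ says nothing about $\ell$ at all. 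So the three cosets met by $\mA$ need not lie in any short progression of cosets, your bound $2(\ell-1)<m$ is unavailable, and the ``no wrap-around'' rectification of $A\bmod m$ breaks down precisely in this case.

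This is not a removable technicality: handling the three-coset case is the main new work in the paper's proof. There one distinguishes whether the three cosets form an arithmetic progression in $\ZZ_n/H$ or not. If they do, one tries to rectify the $3$-term progression of coset representatives (the obstructions $m<6$, $d=m/3$, $d=m/4$ are ruled out using $m\ge10^9$ and normality), and then concludes via Lemma~\ref{lemma:higherdim} as in your case (2). If they do not, one cannot argue through a covering progression at all; instead one exhibits $\dim(A)\ge2$ directly, by showing that $A$ is $F_2$-isomorphic to a set of the form $\{(\mathds{1}_{H+c}(\varphi(a)),a):a\in A\}\subset\ZZ^2$ (for a suitably chosen coset $H+c$, split according to whether $2c_1\equiv2c_2$ in $\ZZ_n/H$) which is not contained in a line. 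Your proposal would be complete if you added an argument of this kind for the exactly-three-cosets case; as written, it only proves the proposition under the unjustified assumption that the case-(2) bound on $\ell$ holds there too.
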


\begin{proof}
	Let $A \subset \ZZ$ satisfy $|2A| \leq 3.04 |A| - 3$ as well as $\max(A) \geq 10^9 |A|$, and assume without loss of generality that $A$ is in normal form. We will show that we must have $\dim(A) \geq 2$, in contradiction to the assumption that $A$ is $1$-dimensional. Let $\varphi : \ZZ \to \ZZ_{\max(A)}$ denote the canonical projection and observe that $\mA = \varphi(A)$ satisfies $|\mA| = |A| - 1 < \max(A)/10^9$. Let $B$ denote the set of elements $x \in 2A$ such that $x+\max(A)$ is also in $2A$.  Since $0$ and $\max(A)$ are both in $A$ we have $B\supset A$, whence $|2A| = |2\mA| + |B| \geq |2\mA| + |A|$, and so 
	\begin{align*}
		|2\mA| \leq |2A| - |A| \leq 2.04 |A| - 3 \leq 2.04 |\mA|.
	\end{align*}
	We can therefore apply Theorem~\ref{thm:DeshouillersFreiman}, obtaining that $\mA$ is covered by some small arithmetic progression of cosets of some proper subgroup $H < \ZZ_{\max(A)}$. Let us go through the cases given by this theorem. In the following $\mathds{1}_{B}$ will denote the indicator function of some given set $B$.
	\begin{enumerate}
		\item	As $A$ is in normal form, $\mA$ cannot be contained in a single coset of $H$.
		\item	If $\mA$ meets exactly $2$ or at least $4$ cosets of $H$ then it is included in an $\ell$-AP of cosets of $H$, where by \eqref{eq:DF1} we have $\ell \leq 1.04|\mA|/|H| + 1\leq (1.04+3/2)|\mA|/|H|$, the last equality following from the last sentence in Theorem \ref{thm:DeshouillersFreiman}. Using that $|\mA|<10^{-9}\max(A)$ we deduce that
			\begin{equation}
				\ell \leq 3|\mA|/|H| < \tfrac{1}{2} \, \left| \ZZ_{\max(A)/|H|} \right|.
			\end{equation}
			Letting $m = \max(A) / |H|$ we now observe that, since $A$ is in normal form, its canonical projection into $\ZZ_m$ cannot be contained in a proper subgroup of $\ZZ_m$. It follows that the common difference of the $\ell$-term arithmetic progression covering this projection of $A$ does not divide $m$, whence we can dilate by the inverse mod $m$ of this common difference, and it follows that the projection of $A$ is $F_2$-isomorphic to some subset of an interval of size $m/2$ in $\ZZ_m$. This projection is therefore rectifiable, so by Lemma~\ref{lemma:higherdim} we have $\dim(A) \geq 2$.
		\item	If $\mA$ meets exactly $3$ cosets of $H$, then we argue in a way similar to case 2, considering the projection of $A$ to $\ZZ_m$ where $m = \max(A)/|H|$. Here, however, we distinguish two cases, according to whether the 3 cosets are in arithmetic progression or not.
		
		 Assume that these cosets are in arithmetic progression with difference $d$. If we can rectify the 3-term progression formed by the cosets' representatives, then we can rectify the projection of $A$ into $\ZZ_m$. By applying Lemma~\ref{lemma:higherdim} as in case 2 we again obtain the contradiction $\dim(A) \geq 2$. If we cannot rectify the 3-term progression, then we must have $m < 6$, $d = m/3$ or $d = m/4$. We certainly have $m\geq 6$ since by \eqref{eq:DF2} we have $|H| \leq (|2\mA| - |\mA|)/2 \leq 0.52 |\mA|$, and as noted above we also have $\max(A) \geq 10^9 |\mA|$, so $m \geq 10^9$. Furthermore, if $d = m/3$ or $d = m/4$ then $m$ is multiple of $d$ and clearly $A$ cannot have been in normal form.
		 
		 If the cosets do not form an arithmetic progression, then we have $\mA \subseteq H \cup (H+c_1) \cup (H+c_2)$ for some $c_1,c_2 \in \ZZ_{\max(A)}$ satisfying $c_2 \not \equiv 2 c_1$, $c_1 \not \equiv 2 c_2$ and $c_1 + c_2 \not \equiv 0$ in $\ZZ_{\max(A)} / H$. Moreover, we may assume that either $2c_1 \not \equiv 0$ or $2c_2 \not \equiv 0$ in $\ZZ_{\max(A)} / H$ as otherwise $\mA$ would only meet $2$ cosets of $H$. We therefore assume without loss of generality that $2c_2 \not \equiv 0$ in $\ZZ_{\max(A)} / H$. If furthermore $2c_2 \not \equiv 2c_1$ in $\ZZ_{\max(A)} / H$, then $\{\mathds{1}_{H+c_2}(\varphi(a)) : a \in A \}$ is $F_2$-homomorphic to $A$ and therefore $\dim(A) \geq 2$ as $A$ is $F_2$-isomorphic to
			\begin{equation}
				\big\{ (\mathds{1}_{H+c_2}(\varphi(a)), a) : a \in A \big\} \subset \ZZ^2
			\end{equation}
			which is not contained in some hyperplane of $\ZZ^2$ as $\varphi(0) = \varphi(\max(A)) \in H$ but $\varphi(a') \in H + c_2$ for some $a' \in A$. If however $2c_1 \equiv 2c_2 \not \equiv 0$ in $\ZZ_{\max(A)} / H$, then likewise we can argue that $\dim(A) \geq 2$ as $A$ now is $F_2$-isomorphic to
			\begin{equation}
				\big\{ (\mathds{1}_{H}(\varphi(a)), a) : a \in A \big\} \subset \ZZ^2
			\end{equation}
			which for the same reason is also not contained in any hyperplane of $\ZZ^2$.
	\end{enumerate}
	It follows that $\dim(A) \geq 2$ in contradiction to the assumption that $A$ is $1$-dimensional.
\end{proof}

We will also need the following two results due to Freiman that will enable us to deal with sets past the $3|A|-4$ threshold that are not $1$-dimensional.

\begin{theorem}[Freiman] \label{thm:dimension_min}
	Every finite set $A \subset \ZZ$ of additive dimension $d$ satisfies
	\begin{equation}
		|2A| \geq (d+1)|A| - \binom{d+1}{2}.
	\end{equation}
\end{theorem}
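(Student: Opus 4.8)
The plan is to prove this by induction on the additive dimension $d$. The base case $d=1$ is classical: a one-dimensional set is, by definition, $F_2$-isomorphic to a set of integers not contained in a hyperplane of $\ZZ^1$, i.e. it genuinely lives in $\ZZ$ after a Freiman isomorphism, and for such a set $|2A| \geq 2|A| - 1$ holds trivially (the sumset of a set of $|A|$ integers contains at least $2|A|-1$ elements). This matches the bound $(d+1)|A| - \binom{d+1}{2}$ at $d=1$, which reads $2|A| - 1$.

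For the inductive step, suppose $\dim(A) = d \geq 2$. The natural approach is to realize $A$ (up to $F_2$-isomorphism) as a subset of $\ZZ^d$ not contained in any hyperplane, and to find a hyperplane $\pi \subset \ZZ^d$ (or an affine hyperplane) such that the slab structure lets us split off a lower-dimensional piece. Concretely, choose a linear functional $\ell : \ZZ^d \to \ZZ$ that is non-constant on $A$, and let $A_0, A_1, \dots$ be the nonempty level sets $A \cap \ell^{-1}(c_i)$ with $c_0 < c_1 < \dots < c_r$. Each $A_i$ sits in an affine hyperplane, so after translation has dimension at most $d-1$. The key combinatorial observation is that $2A$ contains the sumsets $A_i + A_i$ (lying at level $2c_i$) as well as, for consecutive extreme pieces, shifted copies that do not overlap the "interior" contributions; one picks the functional $\ell$ so that the two end layers $A_0$ and $A_r$ are as useful as possible. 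Applying the inductive hypothesis to the layer of largest dimension, and adding the contribution of the remaining layers together with the $2|A_0| + 2|A_r|$-type boundary terms coming from the extreme levels $2c_0$ and $2c_r$, should yield the bound. The cleanest version: pick $\ell$ so that $A$ is not contained in $\ell^{-1}(c)$ for any single $c$, write $A = A' \sqcup A''$ where $A'$ is one nonempty level set and $A'' = A \setminus A'$, note $\dim(A'') \geq d-1$ or $\dim(A') \geq d-1$, apply induction to the big piece, and bound $|2A| \geq |2A''| + 2|A'|$ (the $2|A'|$ coming from $A' + A'$ being shifted to the extreme level), giving $|2A| \geq d|A''| - \binom{d}{2} + 2|A'| \geq (d+1)|A| - \binom{d+1}{2}$ after using $|A| = |A'| + |A''|$ and $|A''| \geq |A| - |A'|$ — one checks the arithmetic $d \cdot |A''| + 2|A'| \geq (d+1)|A| - d$ reduces to $(d-1)|A'| \leq |A''| \cdot 0 + \dots$, which forces choosing $A'$ to be a smallest level set, and then the inequality closes.

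**The main obstacle** I anticipate is making the choice of functional and the splitting precise enough that the dimension drops by exactly one on the large piece while simultaneously guaranteeing the extra $2|A'|$ (rather than merely $|A'|$) in the sumset lower bound — in other words, ensuring the boundary layer's internal sumset $A' + A'$ genuinely contributes disjointly from $2A''$. This requires picking $\ell$ to be a functional realizing the "width" of $A$ in some direction and taking $A'$ to be one of the two extreme layers; then every element of $A' + A'$ sits strictly above (or below) every element of $2A''$ except possibly for the single mixed contribution, which a careful count absorbs. I would also need the small lemma that a finite $A \subset \ZZ^d$ not lying in a hyperplane but with every level set of some functional lying in a hyperplane still has a layer of dimension $\geq d-1$; this follows because the affine span of $A$ is all of $\ZZ^d$, so the affine spans of the layers cannot all have dimension $\leq d-2$. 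Once these structural points are pinned down, the remaining step is the routine induction arithmetic verifying $d|A''| - \binom{d}{2} + 2|A'| \geq (d+1)(|A'|+|A''|) - \binom{d+1}{2}$, i.e. $|A''| \geq (d-1)|A'| $... — here one sees the need to instead take $A'$ to be a minimum-size layer among the at least two layers, making $|A''| \geq |A'|$ insufficient in general and forcing a slightly more clever decomposition (e.g. iterating, or using that with $r+1 \geq 2$ layers the smallest has size $\leq |A|/2$); I expect the honest proof handles this by a direct projection-and-induction argument on the number of layers rather than a single clean split.

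If the splitting proves awkward, the fallback is the standard Freiman argument via the \emph{rectification}/\emph{compression} of $A$ onto a sub-lattice: realize $A \subset \ZZ^d$ with affine span $\ZZ^d$, use that $A$ contains an affinely independent $(d+1)$-subset $\{v_0, \dots, v_d\}$, and observe $2A$ contains the $\binom{d+1}{2} + (d+1)$ sums $v_i + v_j$ ($i \leq j$) which are distinct, plus for each remaining point $a \in A$ the sum $a + v_0$ — a union which is disjoint from most of the first family — yielding at least $|A| - (d+1) + \binom{d+2}{2}$... and one optimizes the overlap count to land exactly on $(d+1)|A| - \binom{d+1}{2}$. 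Either route reduces to elementary counting once the affinely independent skeleton is fixed, so the essential content is genuinely the dimension bookkeeping, not any deep inequality.
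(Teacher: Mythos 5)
You should first note that the paper does not actually prove this statement; it simply cites \cite{Freiman73} (Lemma 1.14), so your argument has to stand on its own. It does not: the inductive step you propose fails numerically, and you notice the problem but never repair it. Writing $A = A' \sqcup A''$ with $A'$ an extreme layer of the functional $\ell$, the disjointness you actually get is $|2A| \geq |2A''| + |2A'| \geq |2A''| + 2|A'| - 1$, and combining this with the inductive bound $|2A''| \geq d|A''| - \binom{d}{2}$ the desired conclusion $(d+1)|A| - \binom{d+1}{2}$ is equivalent (up to the off-by-one) to $(d-1)|A'| + |A''| \leq d$, which is false for every set with more than a handful of elements no matter which layer plays the role of $A'$; taking the smallest layer does not help. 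Even upgrading to a double induction on $(d,|A|)$ so that $A''$ may keep dimension $d$, the removed layer would have to contribute roughly $(d+1)|A'|$ new sums, while the extreme-level argument only yields about $2|A'|$ — two new elements of $2A$ per removed point instead of $d+1$. Concretely, for $d=2$ and two layers of size $|A|/2$ your chain gives only about $2|A|-2$, versus the required $3|A|-3$; to close even this case one must also count the cross sums $A'+A''$, and in general all cross sums $A_i+A_j$ between layers with genuine disjointness bookkeeping (or, as in the standard proof, induct on $|A|$ by deleting a vertex of the convex hull and exhibiting $d+1$ sums that are lost with it). That per-point gain of $d+1$, rather than $2$, is precisely the content of Freiman's lemma and is the idea missing from your outline.

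Two further points. The auxiliary claim that some layer must have dimension at least $d-1$ ("the affine spans of the layers cannot all have dimension $\leq d-2$") is false: four affinely independent points in $\ZZ^3$ split by a functional into two two-point layers have every layer of dimension $1 = d-2$, so even the dimension bookkeeping in your split needs a different argument. Finally, the fallback via an affinely independent skeleton $v_0,\dots,v_d$ together with the sums $a+v_0$ can only ever produce a bound of the shape $|A| + \binom{d+1}{2}$, i.e.\ coefficient $1$ on $|A|$; no optimization of overlaps within that family reaches coefficient $d+1$ unless you use all $d+1$ translates $a+v_i$ and control their mutual collisions, which is again exactly the difficulty the theorem is about. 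As it stands the proposal is a plausible plan with the key counting mechanism absent, not a proof.
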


For a proof see \cite[Lemma 1.14]{Freiman73}.

\begin{theorem}[Freiman] \label{thm:3.3k}
	Let $A \subset \ZZ^2$ be a $2$-dimensional set that cannot be embedded in any straight line and that satisfies $|2A| < \sfrac{10}{3} \, |A| - 5$ and $|A| \geq 11$. Then $A$ is contained in a set which is isomorphic to
	\begin{equation}
		\{(0,0),(0,1),(0,2),\dots,(0,k_1-1),(1,0),(2,0),\dots,(1,k_2-1)\}
	\end{equation}
	where $k_1,k_2 \geq 1$ and $k_1+k_2 \leq |2A|-2|A|+3$.
\end{theorem}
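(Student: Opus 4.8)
The assertion has two parts --- that $A$ fits into two parallel lines, and that the two progressions one then obtains are short --- and the plan is to establish these in order: a sumset lower bound first forces $A$ into two parallel lines, and then a normalisation, combined with Freiman's $3k-4$ theorem in $\ZZ$ applied to the two resulting columns, yields the length estimate.

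\textbf{Confining $A$ to two parallel lines.} For a primitive vector $v\in\ZZ^2$ let $m(v)$ be the number of lines parallel to $v$ that meet $A$; since $A$ is not contained in a line, $m(v)\geq 2$ for all $v$, and the aim is to produce $v$ with $m(v)=2$. Suppose not; fix $v$ attaining $m:=\min_v m(v)\geq 3$ and, after a unimodular change of coordinates, let the lines parallel to $v$ be the columns $x=x_1<\dots<x_m$ of $A$, write $A_i$ for the $i$-th column, $c_i=|A_i|$, so $|A|=\sum_i c_i$. I would bound $|2A|$ below by $\sum_k\max_{x_i+x_j=k}(c_i+c_j-1)$, the sum running over the occupied columns $k$ of $2A$; these are pairwise distinct and always include the $2m-1$ distinct values $2x_1<x_1+x_2<2x_2<\dots<x_{m-1}+x_m<2x_m$. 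For $m=3$ this already forces a contradiction: if $x_1,x_2,x_3$ are not in arithmetic progression one gets six distinct columns and $|2A|\geq 4|A|-6$; if they are, the column of $2A$ at $2x_2=x_1+x_3$ has size at least $\max(2c_2-1,c_1+c_3-1)$, and resolving this maximum favourably according as $2c_2\geq c_1+c_3$ or not gives $|2A|\geq 3c_1+4c_2+3c_3-5$ respectively $|2A|\geq 4c_1+2c_2+4c_3-5$; in either case $|2A|\geq\tfrac{10}{3}|A|-5$, contradicting the hypothesis, so $m\neq 3$. For $m\geq 4$ I would run the same scheme but must enumerate the finitely many ways the sums $x_i+x_j$ can coincide (overlapping triples of columns in arithmetic progression, and so on), bounding each collided fibre below by the relevant maximum --- or, alternatively, peel off an extreme column and reduce to the case $m\leq 3$, invoking the bound $|2A|\geq 3|A|-3$ of Theorem~\ref{thm:dimension_min} on the remainder. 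Here the hypothesis $|A|\geq 11$, in conjunction with Theorem~\ref{thm:dimension_min} for the smallest cases, should be exactly what rules out the handful of extremal-looking column profiles. This case analysis is the technical core of the argument and the step I expect to be the main obstacle, precisely because the target inequality is sharp: the $3\times 3$ grid has $|A|=9$ and $|2A|=25=\tfrac{10}{3}|A|-5$.

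\textbf{Normalising and counting.} Given that $A$ lies in two parallel lines, a sequence of $F_2$-isomorphisms turns these into $x=0$ and $x=1$, normalises each column to a subset of $\NN_0$ with smallest element $0$, and makes $A$ generate $\ZZ^2$; write $A=(\{0\}\times A_0)\cup(\{1\}\times A_1)$ with $A_0,A_1$ nonempty and $0\in A_0\cap A_1$. Then $2A$ consists exactly of the three columns $A_0+A_0$, $A_0+A_1$, $A_1+A_1$, so $|2A|=|A_0+A_0|+|A_0+A_1|+|A_1+A_1|$; writing $e_0,e_1,e_{01}\geq 0$ for the excesses of these cardinalities over $2|A_0|-1$, $2|A_1|-1$ and $|A_0|+|A_1|-1$, the hypothesis gives $e_0+e_1+e_{01}=|2A|-3|A|+3<\tfrac13|A|-2$. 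Since this excess budget is small, each column must lie close to an interval: applying Freiman's $3k-4$ theorem in $\ZZ$ to $A_0$ --- whose hypothesis $|A_0+A_0|\leq 3|A_0|-4$ reads $e_0\leq|A_0|-3$ and is automatic unless $A_0$ is the strictly smaller column --- puts $A_0$ in an arithmetic progression of length $|A_0+A_0|-|A_0|+1=|A_0|+e_0$, and likewise $A_1$ in one of length $|A_1|+e_1$. A short further argument forces these progressions to have common difference $1$ and to be compatibly positioned, since any mismatch in step or offset would inflate $|A_0+A_1|$, hence $e_{01}$, past the budget; the same idea disposes of the leftover case of a small, widely spread column. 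With $k_1=|A_0|+e_0$ and $k_2=|A_1|+e_1$ this gives $k_1+k_2=|A|+e_0+e_1\leq|A|+e_0+e_1+e_{01}=|2A|-2|A|+3$, as required.
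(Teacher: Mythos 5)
The paper does not prove this statement at all --- it is quoted verbatim from Freiman's monograph (Theorem 1.17 there), whose proof is long and delicate --- so your attempt has to stand on its own, and as it stands it has two genuine gaps. The first and most serious one is the reduction to two parallel lines. Your computation for $m=3$ is correct (and sharp, as the $3\times k$ grid shows), but for $m\ge 4$ neither of the two routes you sketch can succeed in the form described. The fibre bound $\sum_{k}\max_{x_i+x_j=k}(c_i+c_j-1)$ uses only the column cardinalities, and for some profiles it simply falls short of $\tfrac{10}{3}|A|-5$: for instance, $m$ columns in arithmetic progression each of size $2$ give only $(2m-1)\cdot 3=3|A|-3<\tfrac{10}{3}|A|-5$ once $|A|\ge 12$, so no enumeration of collisions among the $x_i+x_j$ can rescue a purely count-based argument; one must use that the fibres of $2A$ are unions of genuine sumsets $A_i+A_j$ (and, in such examples, that the minimizing direction forces extra structure). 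Likewise, peeling an extreme column $A_m$ and applying Theorem~\ref{thm:dimension_min} to the rest yields only $|2A|\ge 3|A|+c_{m-1}-5$, which beats $\tfrac{10}{3}|A|-5$ only when the adjacent column has $\ge|A|/3$ elements. This $m\ge 4$ analysis is precisely where the bulk of Freiman's proof lives, and it is missing here.

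The second gap is in the two-lines endgame. Applying the $3k-4$ theorem separately to $A_0$ and $A_1$ gives two progressions whose \emph{common differences need not agree}, and the containment in a set isomorphic to $\{(0,0),\dots,(0,k_1-1),(1,0),\dots,(1,k_2-1)\}$ genuinely requires equal differences (cross relations $a+b=a'+b'$ must be preserved). Forcing equal differences, and handling the case where the smaller column fails $|2A_1|\le 3|A_1|-4$ (e.g. $A_1$ tiny and spread out), is exactly the content of a $3k-4$-type theorem for distinct summands applied to $A_0+A_1$ (Lev--Smeliansky/Stanchescu-style); it is not available in this paper and is not supplied by the phrase ``a short further argument.'' If you do control $A_1$ through $|A_0+A_1|$ you get a covering progression of length $|A_1|+e_{01}$, and the budget $e_0+e_1+e_{01}=|2A|-3|A|+3$ still closes, but that accounting has to be carried out explicitly rather than asserted. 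In short: the skeleton is reasonable and the $m=3$ and bookkeeping computations are right, but the two steps you flag as ``the main obstacle'' and ``a short further argument'' are the actual theorems being proved, and they are not established here.
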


The above result is~\cite[Theorem~1.17]{Freiman73}. We shall use the following consequence.

\begin{corollary}\label{cor:Freiman_2lines}
	Any $2$-dimensional set $A \subset \ZZ$ satisfying $|2A| \leq \sfrac{10}{3} \, |A| - 7$ is contained in the union of two arithmetic progressions $P_1$ and $P_2$ with the same common difference such that $|P_1\cup P_2|\leq |2A| - 2|A| + 3$. Furthermore, the sumsets $2P_1$, $P_1+P_2$ and $2P_2$ are disjoint.
\end{corollary}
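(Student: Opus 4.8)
\textbf{Proof plan for Corollary~\ref{cor:Freiman_2lines}.}

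The plan is to deduce the corollary from Theorem~\ref{thm:3.3k} by a careful bookkeeping of Freiman isomorphisms, taking care that the final containment is a literal one inside $\ZZ$, not merely up to $F_2$-isomorphism. First I would dispose of the small cases $|A| \leq 10$ separately: here $|2A| \leq \sfrac{10}{3}|A| - 7$ forces $|A|$ to be small enough that one checks directly (or simply notes that a $2$-dimensional set of size at most $10$ has $|2A| \geq 3|A| - 3$ by Theorem~\ref{thm:dimension_min}, which already contradicts the hypothesis for $|A| \geq 5$, and $|A| \leq 4$ is impossible for a $2$-dimensional set since then $|2A| \geq 3|A|-3 > \sfrac{10}{3}|A|-7$ fails only when... ) — in any event the range $|A| \leq 10$ either yields a contradiction with the doubling bound or is handled by hand, so I may assume $|A| \geq 11$.

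Next, since $A$ is $2$-dimensional, by definition it is $F_2$-isomorphic to a subset $A' \subset \ZZ^2$ not contained in any line, and $|2A'| = |2A| \leq \sfrac{10}{3}|A| - 7 < \sfrac{10}{3}|A'| - 5$. Theorem~\ref{thm:3.3k} then gives an $F_2$-isomorphism $g$ from $A'$ onto a subset of the explicit ``two parallel segments'' configuration $C = \{(0,j) : 0 \leq j \leq k_1 - 1\} \cup \{(i, 0) : 1 \leq i \leq k_2 - 1\}$ — wait, I should use the stated configuration verbatim, namely $\{(0,0),\dots,(0,k_1-1),(1,0),\dots,(1,k_2-1)\}$, which is the union of the two vertical segments $L_1 = \{0\}\times\{0,\dots,k_1-1\}$ and $L_2 = \{1\}\times\{0,\dots,k_2-1\}$, with $k_1 + k_2 \leq |2A| - 2|A| + 3$. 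Composing, $A$ is $F_2$-isomorphic to a subset $A'' = g(A')$ of $L_1 \cup L_2$. Now here is the key point addressing the earlier error: an $F_2$-isomorphism in general does not let us pull the containment back to the original copy of $A$ inside $\ZZ$. However, $L_1 \cup L_2 \subset \ZZ^2$ is itself rectifiable — the map $\psi\colon \ZZ^2 \to \ZZ$, $(i,j) \mapsto i + Nj$ with $N = k_1 + 1$ (any $N > \max(k_1,k_2)$ works) is injective on $L_1 \cup L_2$ and is an $F_2$-isomorphism on it, because sums of two elements of $L_1 \cup L_2$ have both coordinates bounded, so $N$ large enough separates the ``heights''. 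Thus $\psi(L_1 \cup L_2)$ is a union of two genuine arithmetic progressions in $\ZZ$, namely $P_1 = \psi(L_1) = \{0, N, 2N, \dots, (k_1-1)N\}$ and $P_2 = \psi(L_2) = \{1, 1+N, \dots, 1+(k_2-1)N\}$, both with common difference $N$, and $|P_1 \cup P_2| = k_1 + k_2 \leq |2A| - 2|A| + 3$. Composing everything, $A$ is $F_2$-isomorphic to a subset of $P_1 \cup P_2$.

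Finally I must convert this $F_2$-isomorphic containment into a literal one. The point is that $P_1 \cup P_2$ is a \emph{specific} set, and the composite $F_2$-isomorphism $\Phi\colon A \to A_0 \subseteq P_1 \cup P_2 \subset \ZZ$ is an $F_2$-isomorphism between two sets of integers; by the standard fact that an $F_2$-isomorphism between finite sets of integers extends (after putting both in normal form) to an affine map — more precisely, if $A$ is in normal form then any $F_2$-isomorphism to a set of integers is, up to translation, given by a dilation $x \mapsto \lambda x$ for a rational $\lambda$, hence its image is an affine image of $A$ — we conclude that $A$ itself (before normalisation, i.e. $\lambda \cdot A + \text{const}$) equals, via an affine bijection of $\ZZ$, the set $A_0$. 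Since affine images of arithmetic progressions are arithmetic progressions with the same length, pulling $P_1, P_2$ back through this affine map yields two honest arithmetic progressions in $\ZZ$ with a common difference, containing the original $A$, of total size at most $|2A| - 2|A| + 3$. The hard part is precisely this last paragraph: one must be careful that an $F_2$-isomorphism between integer sets really is affine (this needs $A$, equivalently $A_0$, to generate enough relations — it does once $|A| \geq 3$ and $A$ is put in normal form), and one must track how the ``common difference'' property and the disjointness of $2P_1, P_1+P_2, 2P_2$ survive the affine pullback. The disjointness claim follows because in the model $C \subset \ZZ^2$ the three sumsets $2L_1, L_1+L_2, 2L_2$ lie in the three distinct vertical lines $\{0\}\times\ZZ$, $\{1\}\times\ZZ$, $\{2\}\times\ZZ$ respectively, hence are disjoint; disjointness is preserved by the injective $F_2$-homomorphism $\psi$ on the relevant sums and then by the affine bijection of $\ZZ$.
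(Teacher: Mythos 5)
Your reduction to Theorem~\ref{thm:3.3k} is fine (indeed Theorem~\ref{thm:dimension_min} gives $|2A|\geq 3|A|-3$ for a $2$-dimensional set, which together with $|2A|\leq \sfrac{10}{3}\,|A|-7$ forces $|A|\geq 12\geq 11$), and so is the rectification of the two-segment configuration by $(i,j)\mapsto i+Nj$. The gap is in your last step: the ``standard fact'' that an $F_2$-isomorphism between finite sets of integers is affine (after normalisation) is false, and it fails precisely in the situation you are in. Any bijection between two Sidon sets of the same size is an $F_2$-isomorphism, and these are rarely affine; more to the point, a set of additive dimension $2$ --- which $A$ and $A_0$ both are, being $F_2$-isomorphic to a planar non-collinear set --- always admits non-affine $F_2$-isomorphic images in $\ZZ$: for instance, translate the part of $A_0$ inside $P_2$ by a huge generic amount while fixing the part inside $P_1$; this is an $F_2$-isomorphism of $A_0$ onto its image, but no affine map of $\ZZ$ realises it. Your parenthetical condition ($|A|\geq 3$ and normal form) does not rescue affinity, so the literal containment of $A$ in two progressions cannot be pulled back the way you propose; the hypothesis $\dim(A)=2$ is exactly the regime in which such a transfer breaks down.

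The correct way to convert the model containment into a literal one is not to compare two subsets of $\ZZ$, but to exploit that the planar model $A'\subset\ZZ^2$ is full-dimensional: the paper's Lemma~\ref{lemma:isomorphism} shows that the $F_2$-isomorphism $\varphi\colon A'\to A$ (with $A'$ not contained in a line and $\dim(A)=2$) extends to an affine linear map $\tilde\varphi\colon\ZZ^2\to\ZZ$. Pushing the two vertical segments of Theorem~\ref{thm:3.3k} forward through $\tilde\varphi$ yields two arithmetic progressions with the same common difference (the image of the vertical direction) containing $A=\varphi(A')$, of total length at most $k_1+k_2\leq|2A|-2|A|+3$; and the disjointness used later in the paper (for the sums of the corresponding parts of $A$) follows from the $F_2$-isomorphism itself, since in the model the three sumsets lie on the distinct lines $\{0\}\times\ZZ$, $\{1\}\times\ZZ$ and $\{2\}\times\ZZ$. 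Note that you cannot instead apply Lemma~\ref{lemma:isomorphism} to your composite map $\Phi\colon A\to A_0$ with $d=1$: its proof requires $\dim(A)=d$, and here $\dim(A)=2$. So the detour through rectifying the configuration into $\ZZ$ is precisely what loses the leverage; staying in $\ZZ^2$ and extending affinely, as the paper does, is the missing idea.
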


A proof of this can be immediately derived from the following statement.

\begin{lemma} \label{lemma:isomorphism}
	Given $d \geq 1$ and a finite set $A \subset \ZZ^d$ not contained in a hyperplane, we can extend any Freiman-isomorphism $\varphi$ mapping $A$ to some $A' \subset \ZZ$ to an affine linear map.
\end{lemma}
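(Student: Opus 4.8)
The plan is to prove Lemma~\ref{lemma:isomorphism} by induction on the number of points of $A$, building up an affine linear extension of $\varphi$ one point at a time while maintaining the invariant that the extension is genuinely affine linear on the affine span of the points processed so far. First I would reduce to a convenient normalization: since composing $\varphi$ with a translation in $\ZZ^d$ and with a translation in $\ZZ$ does not affect the statement, I may assume $0 \in A$ and $\varphi(0) = 0$, so that it suffices to extend $\varphi$ to a (genuine) linear map $L \colon \RR^d \to \RR$ (or $\ZZ^d \to \QQ$, clearing denominators at the end). Pick an affine basis $a_0 = 0, a_1, \dots, a_d \in A$ of $\ZZ^d$ — such points exist because $A$ is not contained in a hyperplane — and define $L$ to be the unique linear map with $L(a_i) = \varphi(a_i)$ for $i = 1, \dots, d$. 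The content of the lemma is then that $L(a) = \varphi(a)$ for \emph{every} $a \in A$, not just the chosen basis.

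The key mechanism is the following: whenever a point $a \in A$ satisfies an additive relation $a + x = y + z$ with $x, y, z$ already known to be handled (i.e. lying in $A$ with $\varphi$ already agreeing with $L$ on them), the $F_2$-isomorphism property forces $\varphi(a) = \varphi(y) + \varphi(z) - \varphi(x) = L(y) + L(z) - L(x) = L(a)$. So the step I need is: given that $L = \varphi$ on some subset $S \subseteq A$ containing the affine basis, show that every remaining point $a \in A \setminus S$ participates in such a relation with points of $S$ — equivalently, that $a$ lies in the "$F_2$-closure" generated by $S$ inside $A$. The cleanest way to organize this is to observe that $\ZZ^d$ itself is generated additively (with integer coefficients, i.e. via sums and differences) by the affine basis $\{a_0, \dots, a_d\}$, and any integer combination can be realized through a chain of depth-$2$ relations; one has to be slightly careful that all intermediate sums land inside $A$, which is handled by the standard trick of passing to the set $A - A$ or, more simply, by noting that $a + a_0 + \dots$ type relations can always be balanced so that both sides are single elements of $A$ when $a \in A$ — this is where one uses that $\varphi$ is an $F_2$-isomorphism on $A$ rather than merely on a larger ambient set.

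Concretely, for a point $a \in A$, express $a = \sum_{i=1}^d \lambda_i a_i$ with $\lambda_i \in \QQ$; the target $\varphi(a)$ must equal $\sum \lambda_i \varphi(a_i) = L(a)$ provided we can certify this rationally. The rigorous route avoiding denominators is: for any $a, a' \in A$ with $a + a' \in A + A$, the relation $a + a' = a + a'$ combined with other relations of the form $a_i + a_j = a_k + a_\ell$ (valid in $\ZZ^d$) transfers under $\varphi$; iterating, one shows that the map $a \mapsto \varphi(a) - L(a)$ is an $F_2$-homomorphism from $A$ to $\ZZ$ (or $\QQ$) that vanishes on an affine basis, hence vanishes identically by linearity over the affine span. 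I would phrase the final step as: the function $\psi(a) := \varphi(a) - L(a)$ satisfies $\psi(a_1) + \psi(a_2) = \psi(a_3) + \psi(a_4)$ whenever $a_1 + a_2 = a_3 + a_4$ in $A$ (since both $\varphi$ and $L$ have this property, the latter because it is linear), and $\psi$ vanishes on the affine basis; since $0 \in A$ with $\psi(0) = 0$, and since every $a \in A$ differs from an integer combination of basis points by relations living in $A$, one concludes $\psi \equiv 0$.

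The main obstacle I anticipate is the bookkeeping in the previous paragraph: making sure that the additive relations used to "reach" an arbitrary $a \in A$ from the affine basis only ever involve sums and differences that actually lie within $A$ (respectively within $A + A$), since the $F_2$-isomorphism hypothesis only controls relations among elements of $A$. The standard resolution is to work not with $A$ directly but to first extend $\varphi$ to the subgroup of $\ZZ^d$ generated by $A - A$ using the well-known fact that an $F_2$-isomorphism between subsets of torsion-free groups extends uniquely to a group homomorphism on the generated subgroups (this is essentially the observation that such $\varphi$, after translation, respects $a - a' = b - b'$, hence descends to a well-defined map on differences, which is then additive); once we have a genuine group homomorphism $\Phi$ on $\langle A - A \rangle \cong \ZZ^d$, it is automatically $\ZZ$-linear, hence extends to an affine linear map on $\RR^d$ agreeing with $\varphi$ on $A$, and it restricts to an affine linear map with the claimed properties. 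I would present the proof in this second form — extend to the difference group, invoke that a homomorphism of free abelian groups is linear — as it is shorter and sidesteps the chain-of-relations argument entirely; the first approach is the fallback if one wants to keep everything elementary.
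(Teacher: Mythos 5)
The central tool in the route you say you would actually present is false: an $F_2$-isomorphism between subsets of torsion-free abelian groups does \emph{not} extend to a group homomorphism of the subgroups generated by the difference sets. Well-definedness on differences (which is all that preservation of the depth-$2$ relation $a - a' = b - b'$ gives) does not imply additivity of the induced map. Concretely, take $d = 1$, $A = \{0,1,3\} \subset \ZZ$ and $\varphi(0)=0$, $\varphi(1)=1$, $\varphi(3)=4$. Both $A$ and $\varphi(A)$ are Sidon sets, so the only depth-$2$ relations on either side are the trivial ones and $\varphi$ is an $F_2$-isomorphism; yet the induced map on differences sends $1 \mapsto 1$, $2 \mapsto 3$, $3 \mapsto 4$, so it is not additive and admits no homomorphic (hence no affine) extension. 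The same example shows that your fallback chain-of-relations argument cannot be repaired: a Sidon set has no nontrivial relations $a_1 + a_2 = a_3 + a_4$ at all, so nothing ever forces $\varphi(a) = L(a)$ off the chosen affine basis. The ``bookkeeping'' issue you flag at the end is therefore not bookkeeping but the heart of the matter, and it cannot be closed along the lines you propose.

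What the example really exposes is that the conclusion needs the Freiman dimension of $A$ to equal the ambient dimension $d$ (this is the situation in which the paper applies the lemma: there $A \subset \ZZ^2$ is an $F_2$-isomorphic copy of a $2$-dimensional set, so $\dim(A)=2$), and any correct proof must use this maximality; your argument uses the hypothesis that $A$ is not contained in a hyperplane only to produce an affine basis, which is not enough. The paper's proof is a short contradiction argument built exactly on the maximality: let $\varphi_e$ be the affine linear map agreeing with $\varphi$ on an affine basis of $A$; if $\varphi_e$ and $\varphi$ disagree at some point of $A$, then the graph-type set $\{(a,\varphi_e(a)-\varphi(a)) : a \in A\} \subset \ZZ^{d+1}$ is $F_2$-isomorphic to $A$ (a difference of two Freiman homomorphisms is again one) and is not contained in a hyperplane of $\ZZ^{d+1}$, which forces $\dim(A) \geq d+1$, a contradiction. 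Your function $\psi = \varphi - L$ is precisely the paper's $\varphi_e - \varphi$ up to sign; the missing idea is to use its graph to \emph{raise the dimension} and contradict $\dim(A) = d$, rather than to try to prove $\psi \equiv 0$ by chasing additive relations inside $A$, which in general do not exist.
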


\begin{proof}
	Assume to the contrary that $\varphi$ is not affine linear. As $\dim(A') = \dim(A) = d$, there exist $d$ elements $a_1,\dots,a_d \in A'$ spanning $\ZZ^d$. Let $\varphi_e$ denote the affine linear map $\ZZ^d \to \ZZ$ determined by $a_1,\dots,a_d \in \ZZ^d$ as well as $0$, that is $\varphi_e(a_i) = \varphi(a_i)$ for $i = 1,\dots,d$ and $\varphi_e(0) = \varphi(0)$. As $\varphi$ is not affine linear, we must have $\varphi_e(x) \neq \varphi(x)$ for some $x \in A' \setminus \{a_1,\dots,a_d,0\}$. It follows that $A'' = \{(a,\varphi_e(a)-\varphi(a)) : a \in A'\} \subset \ZZ^{d+1}$ cannot be contained in a hyperplane, that is $\dim(A'') \geq d+1$. However, one can easily verify that $A''$ is Freiman-isomorphic to $A'$, giving us a contradiction.
\end{proof}

\begin{proof}[Proof of Corollary~\ref{cor:Freiman_2lines}]
	Let $A' \subset \ZZ^2$ denote a set that is $F_2$-isomorphic to $A$ and not contained in a line. By Theorem~\ref{thm:3.3k} we can assume that $A'$ is contained in two lines of combined size less than $|2A| - 2|A| + 3$. By Lemma~\ref{lemma:isomorphism} the $F_2$-isomorphism $\varphi$ mapping $A'$ to $A$ can be extended to an affine linear map, implying the desired statement.
\end{proof}

\subsection{The Fourier-analytic Rectification}

It is obvious that at least half of any set $\mA \subset \ZZ_p$ can be rectified. It is reasonable to expect that if $\mA$ is `concentrated' in some sense, then one should be able to rectify significantly more than just half of the set. Freiman stated such a result using the language of large Fourier coefficients. In the following $\widehat{\mathds{1}}_{\mA}(x)=\sum_{a\in \mA} e^{2\pi ax/p}$ will denote the Fourier transform of the indicator function of some set $\mA \subset \ZZ_p$. 

\begin{theorem}[Freiman~\cite{Freiman73}] \label{thm:FreimanRectification}
	For any $\mA \subset \ZZ_p$ and $d \in \ZZ_p^{\star}$ there exists $u \in \ZZ_p$ such that
	\begin{equation}
		\big| [u,u+p/2) \cap d \cdot \mA \big| \geq \frac{|\mA| + |\widehat{\mathds{1}}_{\mA}(d)|}{2}.
	\end{equation}
\end{theorem}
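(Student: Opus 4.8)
The plan is to prove Theorem~\ref{thm:FreimanRectification} by a direct averaging argument over all intervals of length $p/2$, following the classical approach of Freiman. After replacing $\mA$ by $d\cdot\mA$ (which has the same cardinality and the same relevant Fourier coefficient shifted appropriately), it suffices to treat the case $d=1$; I would state this normalization first. For $u\in\ZZ_p$ let $I_u=[u,u+p/2)$ denote the corresponding interval of $\ceil{p/2}$ or $\floor{p/2}$ consecutive residues (being slightly careful about the odd/even parity of $p$, but this only affects lower-order terms), and set $f(u)=|I_u\cap\mA|=\sum_{a\in\mA}\Ind{a\in I_u}$. The goal is to show $\max_u f(u)\geq (|\mA|+|\widehat{\mathds 1}_{\mA}(1)|)/2$, and the natural way is to exhibit a weighting of the $u$'s under which the weighted average of $f(u)$ already equals the right-hand side.

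The key computational step is to choose weights that pick out the first Fourier mode. Concretely, I would average $f(u)$ against a suitable nonnegative weight $w(u)$ (of total mass $1$) whose Fourier expansion has a large first coefficient — for instance a weight supported on the ``top half'' of the circle in the direction of the phase of $\widehat{\mathds 1}_\mA(1)$, or more cleanly a weight of the form $w(u)\propto \max\{0,\cos(2\pi u/p-\theta)\}$ for an appropriate phase $\theta$. Expanding $\sum_u w(u)f(u)=\sum_{a\in\mA}\sum_u w(u)\Ind{a\in I_u}=\sum_{a\in\mA} g(a)$, where $g(a)=\sum_{u:\,a-u\in[0,p/2)} w(u)$ is itself a ``half-circle average'' of $w$. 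One then computes the Fourier transform of $g$: the convolution with the indicator of an interval of length $p/2$ multiplies the $k$-th Fourier coefficient of $w$ by a Dirichlet-type kernel value, which vanishes for all even $k\neq 0$ and, crucially, is a definite constant for $k=\pm 1$; choosing $w$ so that its only surviving modes are $k=0$ and $k=\pm1$ then yields $\sum_{a\in\mA} g(a)=\tfrac12|\mA|+\tfrac12|\widehat{\mathds 1}_{\mA}(1)|$ after fixing the phase $\theta$ optimally. Since the weighted average of $f(u)$ equals this quantity, some $u$ must achieve at least it, giving the claim.

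The main obstacle I expect is controlling the contribution of the higher odd Fourier modes $k=\pm3,\pm5,\dots$ of the weight $w$: the half-circle averaging kills even modes but only damps odd ones (by factors $1/|k|$ up to sign), so a generic nonnegative $w$ will leave an error term $\sum_{k\text{ odd},|k|\geq 3}$ that could in principle have the wrong sign. The clean resolution — and the reason the bound comes out exactly as stated with no error term — is to pick $w$ so that the induced half-circle average $g$ is \emph{exactly} $\tfrac12+\tfrac12\cos(2\pi a/p-\theta)$, i.e.\ to solve backwards for $w$ as a (nonnegative!) function whose only half-circle-surviving modes are $0$ and $\pm1$; verifying that such a $w$ exists and is genuinely a probability weight on $\ZZ_p$ is the delicate part. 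Once that is in place, optimizing over the phase $\theta$ to align with $\arg\widehat{\mathds 1}_{\mA}(1)$ and taking the pointwise maximum over $u$ finishes the proof. A discrete subtlety to handle along the way is that for odd $p$ the interval $[u,u+p/2)$ contains $(p-1)/2$ points (so the two complementary half-intervals cover $\ZZ_p$ with a single element counted once extra), which I would absorb by a symmetrization $f(u)+f(u+\floor{p/2})\geq |\mA|$ type identity rather than worrying about it mode by mode.
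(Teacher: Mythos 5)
The paper does not actually prove Theorem~\ref{thm:FreimanRectification}; it quotes it from Freiman's book, so your sketch has to be judged on its own terms, and as written its central step fails. Your ``clean resolution'' asks for a nonnegative probability weight $w$ on $\ZZ_p$ such that $g(a)=\sum_{u:\,a\in[u,u+p/2)}w(u)$ equals \emph{exactly} $\tfrac12+\tfrac12\cos(2\pi a/p-\theta)$. In $\ZZ_p$ with $p$ an odd prime this $w$ does not exist: the discrete window $h=\mathds{1}_{[0,m)}$ has $m=(p\pm1)/2$ points, so $\widehat{h}(k)\neq 0$ for \emph{every} $k\not\equiv 0$ (since $km\not\equiv 0 \bmod p$) --- in particular the even modes are \emph{not} killed exactly, contrary to what you assert --- and hence the deconvolution is unique: $\widehat{w}(k)=0$ for $2\le|k|\le p-2$, $\widehat{w}(0)=p/(2m)$ and $|\widehat{w}(\pm1)|=\tfrac{p}{4}/|\widehat{h}(1)|=\tfrac{p}{2}\sin\tfrac{\pi}{2p}$. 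Thus $w(u)=\tfrac1p\bigl(\widehat{w}(0)+2|\widehat{w}(1)|\cos(2\pi u/p-\psi)\bigr)$ with amplitude ratio $2|\widehat{w}(1)|/\widehat{w}(0)=2m\sin\tfrac{\pi}{2p}\to\tfrac{\pi}{2}>1$, so $w$ is negative on a set of size proportional to $p$. With signed weights the inequality $\max_u f(u)\ge\sum_u w(u)f(u)$ --- the whole point of the averaging --- is no longer available, so the proof as proposed does not go through.

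The approach is salvageable, and in fact your first candidate weight, $w(u)\propto\max\{0,\cos(2\pi u/p-\theta)\}$, is the right one --- but for the opposite reason to the one you give: the half-wave rectified cosine has \emph{no} odd harmonics beyond $\pm1$ (its tail consists of even harmonics only), and it is the even modes that the length-$p/2$ window (nearly) annihilates, while odd modes survive. In the continuous model one checks that this weight produces exactly $g(a)=\tfrac12+\tfrac12\cos(2\pi a/p-\theta')$. Moreover the identity $\cos t-\tfrac13\cos 3t+\tfrac15\cos 5t-\cdots=\tfrac{\pi}{4}\,\mathrm{sgn}(\cos t)$ shows that any nonnegative unit-mass weight whose odd modes of order $\ge 3$ vanish satisfies $|\widehat{w}(1)|\le\pi/4$, and $\pi/4$ is precisely what is needed here (the target first coefficient is $p/4$ while $|\widehat{h}(1)|\approx p/\pi$): your scheme operates with \emph{zero} slack, so the discretization errors you propose to ``absorb by a symmetrization'' (the window has $(p\pm1)/2$ points, the even modes of $h$ are $\Theta(1)$ rather than $0$, the sampled weight has small aliased odd modes) genuinely enter the final bound, at the level of an additive $O(|\mA|/p)$ (or at least $O(1)$) loss that must be tracked explicitly. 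Such a loss is harmless for this paper's application, where \eqref{eq:A1size} has enormous room, but it means your sketch does not yet establish the clean inequality of the theorem; the missing work is exactly the step you flagged as delicate, and the exact-equality route you propose for it is impossible.
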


It should be noted that an improved version of this result can be obtained using a result of Lev~\cite{Lev05}. However, we will stick to using Theorem~\ref{thm:FreimanRectification} when proving our main statement, as the improvement that would follow from using Lev's result is negligible in our case. Lastly, it was also Freiman who noted that a small sumset implies the existence of a large Fourier coefficient and hence a certain `concentration' of the set. We state this observation in the following form. The proof follows by a standard application of the Cauchy-Schwarz inequality.

\begin{lemma}[Freiman~\cite{Freiman73}] \label{lemma:LargeFourierCoefficient}
	For any $\mA \subset \ZZ_p$ there exists $d \in \ZZ_p^{\star}$ such that 
	\begin{equation}
		\big|\widehat{\mathds{1}}_{\mA}(d)\big| \geq \left( \frac{p/|2\mA|-1}{p/|\mA|-1} \right)^{1/2} |\mA|.
	\end{equation}
\end{lemma}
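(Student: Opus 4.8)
The plan is to derive this bound from a Cauchy--Schwarz estimate on the additive energy of $\mA$, transferred to the Fourier side. Write $k = |\mA|$ and let $r(x) = \#\{(a,b) \in \mA \times \mA : a+b = x\}$ be the representation function, which is supported on the $|2\mA|$ elements of $2\mA$ and satisfies $\sum_{x \in \ZZ_p} r(x) = k^2$. Applying Cauchy--Schwarz to $\sum_{x \in 2\mA} r(x) \cdot 1$ gives
\[
	k^4 = \Big( \sum_{x} r(x) \Big)^2 \leq |2\mA| \sum_{x} r(x)^2,
\]
so the additive energy $\sum_x r(x)^2$ is at least $k^4 / |2\mA|$.

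Next I would pass to the frequency side using the convolution and Parseval identities. Since $r = \mathds{1}_\mA * \mathds{1}_\mA$, we have $\sum_x r(x)^2 = \tfrac{1}{p} \sum_{t \in \ZZ_p} |\widehat{\mathds{1}}_\mA(t)|^4$; isolating the term $t = 0$, which equals $k^4/p$ because $\widehat{\mathds{1}}_\mA(0) = k$, the previous paragraph yields
\[
	\sum_{t \neq 0} \big| \widehat{\mathds{1}}_\mA(t) \big|^4 \;\geq\; k^4 \Big( \frac{p}{|2\mA|} - 1 \Big).
\]
On the other hand $\sum_{t \neq 0} |\widehat{\mathds{1}}_\mA(t)|^4 \leq \big( \max_{t \neq 0} |\widehat{\mathds{1}}_\mA(t)|^2 \big) \sum_{t \neq 0} |\widehat{\mathds{1}}_\mA(t)|^2$, and Parseval gives $\sum_{t} |\widehat{\mathds{1}}_\mA(t)|^2 = pk$, hence $\sum_{t \neq 0} |\widehat{\mathds{1}}_\mA(t)|^2 = pk - k^2 = k^2 (p/|\mA| - 1)$. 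Combining these with the previous display and dividing (we may assume $\mA \neq \ZZ_p$, since otherwise the claimed bound is $0$ and trivially holds), we obtain
\[
	\max_{t \neq 0} \big| \widehat{\mathds{1}}_\mA(t) \big|^2 \;\geq\; k^2 \, \frac{p/|2\mA| - 1}{p/|\mA| - 1}.
\]
Choosing $d \in \ZZ_p^\star$ to be a frequency at which this maximum is attained and taking square roots completes the argument.

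There is no real obstacle here: the whole argument is a bookkeeping exercise once the Cauchy--Schwarz step is pointed in the right direction. The only things to be careful about are fixing the Fourier normalization consistently with the definition of $\widehat{\mathds{1}}_\mA$ used above (so that Parseval reads $\sum_t |\widehat{\mathds{1}}_\mA(t)|^2 = p|\mA|$ and the energy identity carries the factor $1/p$), and disposing of the degenerate case $\mA = \ZZ_p$, where $p/|\mA| - 1 = 0$ and the inequality is vacuous.
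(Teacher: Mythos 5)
Your proof is correct and reaches exactly the constant in the lemma, but it runs along a slightly different track than the paper's. You lower-bound the additive energy by Cauchy--Schwarz in physical space against the support $2\mA$ of $\mathds{1}_{\mA}*\mathds{1}_{\mA}$, convert the energy to $\tfrac{1}{p}\sum_t |\widehat{\mathds{1}}_{\mA}(t)|^4$ by Parseval, split off the $t=0$ term, and bound the remaining $\ell^4$ sum by the square of the largest nonzero coefficient times the $\ell^2$ tail. The paper instead evaluates the trilinear sum $\sum_a \widehat{\mathds{1}}_{\mA}(a)^2\,\overline{\widehat{\mathds{1}}_{2\mA}(a)} = |\mA|^2 p$ (using the same fact, namely that every sum $x_1+x_2$ lands in $2\mA$) and argues by contradiction, applying Cauchy--Schwarz on the Fourier side to the nonzero modes and invoking Parseval for both $\mA$ and $2\mA$; thus $\widehat{\mathds{1}}_{2\mA}$ appears explicitly there but never in your argument. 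These are essentially dual applications of Cauchy--Schwarz, and a short computation shows they yield the identical bound $\left(\tfrac{p/|2\mA|-1}{p/|\mA|-1}\right)^{1/2}|\mA|$; yours has the mild advantages of being direct rather than by contradiction and of needing only the Fourier data of $\mA$ itself. One pedantic point: when $\mA=\ZZ_p$ the displayed ratio is $0/0$ rather than $0$, so the lemma is implicitly about proper nonempty subsets --- the same implicit assumption the paper makes --- and with that caveat your treatment of the degenerate case is fine.
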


\begin{proof}
	We start by observing that 
	\begin{align*}
		\sum_{a=0}^{p-1} \, \widehat{\mathds{1}}_{\mA}(a)^2 \, \overline{\widehat{\mathds{1}}_{2\mA}(a)} = \sum_{a=0}^{p-1} \,\sum_{x_1,x_2 \in \mA} \, \sum_{x_3 \in 2\mA} e^{2\pi i a (x_1 + x_2 -x_3)/p} = |\mA|^2p.
	\end{align*}
	Now if $|\widehat{\mathds{1}}_{\mA}(a)| \leq \theta \, |\mA|$ for all $a \neq 0 \mod p$ and
	\begin{equation*}
		\theta < \left( \frac{p/|2\mA|-1}{p/|\mA|-1} \right)^{1/2}
	\end{equation*}
	then using Cauchy-Schwarz one would get the contradiction
	\begin{align*}
		\sum_{a=0}^{p-1} \, \widehat{\mathds{1}}_{\mA}(a)^2 \, \overline{\widehat{\mathds{1}}_{2\mA}(a)} & = |\mA|^2 \, |2\mA| + \sum_{a=1}^{p-1} \, \widehat{\mathds{1}}_{\mA}(a)^2 \, \overline{\widehat{\mathds{1}}_{2\mA}(a)} \\
		& \leq |\mA|^2 \, |2\mA| + \theta |\mA| \left( \sum_{a=1}^{p-1} |\widehat{\mathds{1}}_{\mA}(a)|^2 \right)^{1/2}\left( \sum_{a=1}^{p-1} |\widehat{\mathds{1}}_{2\mA}(a)|^2  \right)^{1/2} \\
		& = |\mA|^2 \, |2\mA| + \theta |\mA| \left( |\mA|p - |\mA|^2 \right)^{1/2}\left( |2\mA| p - |2\mA|^2 \right)^{1/2} < |\mA|^2 p.
	\end{align*}
	The desired statement follows.
\end{proof}

\section{Proof of Theorem~\ref{thm:main}} \label{sec:proof}

Note that throughout the proof we will simplify notation by just writing $p/2$ and $p/3$ rather than the correct rounded version. In all cases there will be an appropriate amount of slack that justifies this simplification.

Let $d \in \ZZ_p^{\star}$ and $u \in \ZZ_p$ be such that $\mA_1 = [u,u+p/2) \cap d \cdot \mA$ satisfies
\begin{equation}
	|\mA_1| = \max_{u',d'} |[u',u'+p/2) \cap d' \cdot \mA|.
\end{equation}
We assume without loss of generality that $d = 1$ and $u = 0$. By Theorem~\ref{thm:FreimanRectification} and Lemma~\ref{lemma:LargeFourierCoefficient} we have that
\begin{equation} \label{eq:A1size}
|\mA_1| \geq \left( 1 + \left( \frac{p/|2\mA|-1}{p/|\mA|-1} \right)^{1/2} \right) \frac{|\mA|}{2} > 0.8175 \, |\mA|.
\end{equation}
We note that $\mA_1$ satisfies $|2\mA_1| \leq 3.04|\mA_1| - 7$ as otherwise we would get the contradiction
\begin{align} \label{eq:A1sumset}
2.48 \, |\mA| - 7 & \geq |2 \mA| \geq |2\mA_1| > 3.04|\mA_1| - 7 > 2.484 \, |\mA| - 7.
\end{align}
As $\mA_1$ is contained in an interval of size less than $p/2$, it is rectifiable and hence there exists some $F_2$-isomorphic set $A_1 \subset \ZZ$. We note that due to Theorem~\ref{thm:dimension_min} we have $\dim(A_1) \in \{1,2\}$. Let us distinguish between these two cases.

\medskip

\noindent {\bf Case 1.} If $\dim(A_1) = 1$, then by Proposition~\ref{prop:4k-8_partial} it is contained in an arithmetic progression of size less than $10^9|\mA_1|$. If the common difference $r$ of this progression is not 1, we may dilate by $r^{-1}\mod p$ and translate once more, so that we may assume that $\mA_1 \subset [0,10^9|\mA_1|]$. Since $|\mA_1|$ is by assumption the most elements any $p/2$-segment can contain of any dilate of $\mA$, it follows that $\mA \subset [0,10^9|\mA_1|] \cup [p/2,p/2 + 10^9|\mA_1|]$. Therefore $\big( 2 \cdot \mA \big) \subset [0, 2 \cdot 10^9 \, |\mA_1|] \subset  [0,p/2)$. Hence all of $\mA$ can be rectified, so the $3|A|-4$ statement in the integers gives the desired covering.

\medskip

\noindent {\bf Case 2.} If $\dim(A_1) = 2$ then we apply Corollary~\ref{cor:Freiman_2lines}, obtaining progressions $P_1,P_2$ with union covering $A_1$, with same common difference $r$. We claim that we can assume without loss of generality that $\mA_1\subset [0,3|\mA|)\cup [c,c+3|\mA|)\subset [0,p/2)$ with $0, c+3|\mA|-1 \in \mA$ for some $c \in \ZZ_p$  and $|\mA_1 \cap [0,3|\mA|) |\geq |\mA_1|/2$. Indeed, if $r\neq 1$, then we can dilate by $r^{-1}\mod p$ so as to ensure that $r^{-1}\cdot \mA_1 \subseteq [0,3|\mA|) \cup [c,c+3|\mA|)$ with $0, c+3|\mA|-1 \in r^{-1}\cdot \mA$. If $c+3|\mA| < p/2$, then the first two requirements are met and we can ensure that $|r^{-1}\cdot \mA_1 \cap [0,3|\mA|) |\geq |\mA_1|/2$ by multiplying the set with $-1$ and translating if necessary. If $p/2-3|\mA| \leq c \leq p/2+3|\mA|$, then $2\cdot r^{-1}\cdot \mA_1$ must lie in $[-6|\mA|,6|\mA|]$ and arguing as in case 1 we conclude that $2\cdot r^{-1}\cdot \mA\subset \{0,p/2\}+[-6|\mA|,6|\mA|]$, so $4\cdot r^{-1}\cdot  \mA\subset [-12|\mA|,12|\mA|]$ and again we can rectify all of $\mA$ and complete the argument this way. Lastly, if $p/2 + 3|\mA| < c$ then we simply translate the set by $-c$ to meet the first two requirements and again multiply by $-1$ if necessary. This proves our claim.

Now, let $\mS' = [0,3|\mA|)$, $\mS'' = [c,c+3|\mA|)$, $\mA_1' = \mA_1 \cap \mS'$ and $\mA_1'' = \mA_1 \cap \mS''$. By the claim above we have  $|\mA_1'| \geq |\mA_1|/2$ and $\mS' \cup \mS'' \subset [0,p/2)$. We now show that
\begin{equation} \label{eq:Rcontainement}
	\mR := \mA \setminus \mA_1 = \mA \setminus (\mS' \cup \mS'') \subset [2c-3|\mA|,2c+6|\mA|) = 2\mS'' + [-3|\mA|,0].
\end{equation}
We start by observing that by assumption $\mA_1 = \mA \cap (\mS' \cup \mS'')$ was the most of $\mA$ we could rectify. It follows that $[0,p/2) \setminus (\mS' \cup \mS'')$ does not contain any elements of $\mA$. Next, let us assume that there exists $a \in \mA$ satisfying $a \in [-3|\mA|,0)$. Since $\mA_1\cup \{a\}$ cannot be rectified, we must have $c+3|\mA| > p/2 - 3|\mA|$. This implies that $[c,c+3|\mA|)\subset [p/2-6|\mA|,p/2)$, whence
\begin{equation*}
	2 \cdot (\mA_1 \cup \{a\}) \subset [-12|\mA|,6|\mA|) \subset [0,p/2) -12|\mA|,
\end{equation*}
which contradicts our maximality assumption about $\mA_1$. It follows that $\mA \cap [-3|\mA|,0) = \emptyset$. Arguing similarly, we see that $\mA \cap [c+3|\mA|,c+6|\mA|) = \emptyset$: certainly $\mA \cap [c+3|\mA|,p/2)\neq \emptyset$, and if there is $a\in \mA \cap [p/2,c+6|\mA|)\subset [p/2,p/2+3|\mA|)$ then $[c,c+3|\mA|)\subset [p/2-6|\mA|,p/2)$, and so $2\cdot(\mA_1\cup \{a\})\subset [-12|\mA|,12|\mA|)$, again contradicting our maximality assumption.

Next, we note that
\begin{equation*}
	2\mA_1 \subset [0,6|\mA|) \cup [c,c+6|\mA|) \cup [2c,2c+6|\mA|).
\end{equation*}
It follows that if there exists $a  \in \mA$ satisfying
\begin{equation*}
	a \in [p/2,0) \setminus \big( [-3|\mA|,6|\mA|) \cup [c -3|\mA|,c+6|\mA|) \cup [2c -3|\mA|,2c+6|\mA|) \big),
\end{equation*}
then $a + \mA_1'$ does not intersect $2\mA_1$ and we get the contradiction
\begin{align*}
	|2\mA| \geq |2\mA_1| + |a + \mA_1'| & \geq (2|\mA_1'| - 1) + (2|\mA_1''| - 1)  + (|\mA_1'| + |\mA_1''| - 1) + |\mA_1'| \\
	& \geq 3.5|\mA_1| - 3 > 2.48|\mA| - 7.
\end{align*}
Note that we have used that $2\mA_1' \cap (A_1' + A_1'') = \emptyset$ as well as $2\mA_1'' \cap (A_1' + A_1'') = \emptyset$ as given by Corollary~\ref{cor:Freiman_2lines}. Using the previous observations, it follows that equation~\eqref{eq:Rcontainement} is established and we have $\mA_1 = \mA_1' \cup \mA_1'' \cup \mR$ where $\mR = \mA \cap [2c-3|\mA|,2c+6|\mA|)$. Note that we may assume that $|\mR| \geq 0.17|\mA|$ as otherwise $|\mA_1| \geq 0.83|\mA|$ and in equation~\eqref{eq:A1sumset} we would in fact get $|2\mA_1| \leq 3|A| - 4$, which due to Theorem~\ref{thm:dimension_min} would contradict our assumption that $\mA_1$ is $2$-dimensional.

We note that $2\mA \supseteq 2\mA_1 \cup (\mA_1'' + \mR)$ and that trivially $|\mA_1'' + \mR| \geq |\mR|$. It follows that $\mA_1'' + \mR$ must intersect $2\mA_1$ since otherwise we would get the contradiction
\begin{align*}
	|2\mA| \geq |2\mA_1| + |\mR| \geq 3.17|\mA_1| - 2 > 2.48|\mA| - 7.
\end{align*}

It follows that one of the following must hold:
\begin{itemize}
	\item[(i)]	If $(\mA_1''+\mR) \cap 2\mA_1'' \neq \emptyset$, then we must have 
	\begin{align*}
		3c + 9|\mA| - p \geq 2c \quad \text{and} \quad 3c - 3|\mA| - p \leq	 2c + 6|\mA|,
	\end{align*}
	and therefore $c \in [p - 9|\mA|,p + 9|\mA|]$. However, we know that $c \leq p/2$ and that the cardinality of $\mA$ is sufficiently small with respect to $p$, so we get a contradiction. it follows that 
	\item[(ii)]	If $(\mA_1''+\mR) \cap (\mA_1' + \mA_1'') \neq \emptyset$, then we must have 
	\begin{align*}
		3c + 9|\mA| - p \geq c \quad \text{and} \quad 3c - 3|\mA| - p \leq	 c + 6|\mA|,
	\end{align*}
	and therefore $c \in [p/2 - \sfrac{9}{2} \, |\mA|, p/2 + \sfrac{9}{2} \, |\mA|]$. Consequently, in this case $\mA_1'$ and $\mR$ are focused around $0$ and $\mA_1''$ is focused around $p/2$. It follows that a dilation by a factor of $2$ focuses all parts of $\mA$ around $0$, that is
	\begin{equation*}
		2 \cdot \mA \subset [-12|\mA|,15|\mA|) \subset -12|\mA| + [0,p/2).
	\end{equation*}
	This means that all of $\mA$ can be rectified and we can just apply the $3|\mA|-4$ statement in the integers to get the desired covering property.

	\begin{figure}[h]
	\begin{center}
	\bigskip
	\scalebox{0.8}{
	\begin{tikzpicture}
		\begin{scope}
			\draw[black, line width=0.3mm] (0,2.7) -- (0,3.3);
			\draw[black, line width=0.3mm] (0,-2.7) -- (0,-3.3);
			\draw[black, line width=0.3mm] (0,2.8) -- (0,3.2);
			\draw[black, line width=0.3mm] (0,2.8) -- (0,3.2);
			\draw[black, line width=0.3mm] (0,0) pic{carc=0:360:3};
			\draw[black, line width=2.5mm] (0,0) pic{carc=90:65:3};
			\draw[black, line width=2.5mm] (0,0) pic{carc=-20:-45:3};
			\draw[black, line width=2.5mm] (0,0) pic{carc=-140:-165:3};
			\node[scale=1.2, black] at (0,3.7) {$0$};		\node[scale=1.2, black] at (0,-3.7) {$p/2$};
			\node[scale=1, black] at (.6,2.4) {$\mA_1'$};
			\node[scale=1, black] at (2.1,-1.3) {$\mA_1''$};
			\node[scale=1, black] at (-2.3,-1.1) {$\mR$};
		\end{scope}
	\end{tikzpicture}
	\hspace{1cm}
	\begin{tikzpicture}
		\begin{scope}
			\draw[black!20, line width=3.6mm] (0,0) pic{carc=100:40:3};
			\draw[black!20, line width=3.6mm] (0,0) pic{carc=-10:-70:3};
			\draw[black!20, line width=3.6mm] (0,0) pic{carc=-140:-190:3};
			\draw[black, line width=2mm] (0,0) pic{carc=90:50:3};
			\draw[black, line width=2mm] (0,0) pic{carc=-20:-60:3};
			\draw[black, line width=2mm] (0,0) pic{carc=-150:-180:3};
			\node[scale=1.2, black] at (0,3.7) {$0$};		\node[scale=1.2, black] at (0,-3.7) {$p/2$};
			\node[scale=1, black] at (.7,2.3) {$2\mA_1'$};
			\node[scale=1, black] at (1.4,-1.6) {$\mA_1' + \mA_1''$};
			\node[scale=1, black] at (-2.3,-.7) {$2\mA_1''$};
			\node[scale=1, black!50] at (1.5,3.3) {$\mR + \mA_1''$};
			\node[scale=1, black!50] at (2.6,-2.4) {$2\mR$};
			\node[scale=1, black!50] at (-3.9,-.8) {$\mR + \mA_1'$};
			\draw[black, line width=0.3mm] (0,2.7) -- (0,3.3);
			\draw[black, line width=0.3mm] (0,-2.7) -- (0,-3.3);
			\draw[black, line width=0.3mm] (0,2.8) -- (0,3.2);
			\draw[black, line width=0.3mm] (0,2.8) -- (0,3.2);
			\draw[black, line width=0.3mm] (0,0) pic{carc=0:360:3};
		\end{scope}
	\end{tikzpicture}
	}
	\end{center}
	\caption{Distribution of $\mA$ and $2\mA$ in $\ZZ_p$ in case~(iii).} \label{fig:2dim}
	\end{figure}
		
	\item[(iii)]	If $(\mA_1''+\mR) \cap 2\mA_1' \neq \emptyset$, then we must have 
	\begin{align*}
		3c + 9|\mA| - p \geq 0 \quad \text{and} \quad 3c - 3|\mA| - p \leq	6|\mA|,
	\end{align*}
	and therefore $c \in [p/3 -3|\mA|,p/3+3|\mA|]$. Consequently, in this case $\mA_1',\mA_1''$ and $\mR$ (or rather the intervals containing them) are roughly "equally distributed" in $\ZZ_p$, that is they are respectively focused around $0,p/3$ and $2p/3$ as illustrated in Figure~\ref{fig:2dim}. It follows that a dilation by a factor of $3$ focuses all parts of $\mA$ around $0$, that is
	\begin{equation*}
		3 \cdot \mA \subset [-27|\mA|,54|\mA|) \subset -27|\mA| + [0,p/2).
	\end{equation*}
	This again means that all of $\mA$ can be rectified and we can just apply the $3|\mA|-4$ statement in the integers to get the desired covering property.
\end{itemize}

It follows that we have proven the statement of Theorem~\ref{thm:main}. \qed

\section{Concluding remarks} \label{sec:remarks}

It is probably unreasonable to expect that this rectification methodology (of rectifying a large part of the set and arguing from there) will lead to a proof of the full conjecture with tight constants. In fact, the more natural direction seems to be to apply covering results in the cyclic group in order to prove covering statements in the integers. Both Lev and Smeliansky's proof of Freiman's $3|A|-4$ statement in the integers as well as the proof of Proposition~\ref{prop:4k-8_partial} fall into that category.

Even if all other ingredients existed in their ideal form, the rectification argument through a large Fourier coefficient appears to imply an inherent loss in the density. This is a problem concerning not only Freiman's original approach and the result presented here, but also the broader result of Green and Ruzsa.

\bigskip

Two almost identical conjectures, differing only by a constant of $1$, have been made as to what the true form of a statement like Theorem~\ref{thm:main} should look like, see~\cite{HSZ05, G13, CR17} as well as~\cite{SZ09}. It is clear that any such statement should include the following result of Vosper.

\begin{theorem}[Vosper~\cite{Vosper56}]
	Let $\mA \subseteq \ZZ_p$ satisfy $|\mA| \geq 2$ and $|2\mA| \leq p-2$. Then $|2\mA| = 2|\mA| - 1$ if and only if $\mA$ is an arithmetic progression.
\end{theorem}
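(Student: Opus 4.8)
# Proof Proposal for Vosper's Theorem

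The plan is to prove the two directions separately, with the forward direction (small sumset implies arithmetic progression) being the substantial one. The backward direction is immediate: if $\mA = \{a, a+d, \dots, a+(k-1)d\}$ is a $k$-term arithmetic progression with common difference $d \in \ZZ_p^\star$, then after dilating by $d^{-1}$ and translating we may assume $\mA = \{0, 1, \dots, k-1\}$, whence $2\mA = \{0, 1, \dots, 2k-2\}$ has exactly $2k-1 = 2|\mA|-1$ elements.

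For the forward direction, I would first dispose of trivial or degenerate cases. If $|\mA| = p$ or $|2\mA| = p$ the hypotheses force small cases one can check directly; the hypothesis $|2\mA| \leq p-2$ is precisely what rules out the degenerate behaviour where $2\mA$ is almost everything. The main tool is the Cauchy--Davenport inequality, which gives $|\mA + \mB| \geq \min(p, |\mA|+|\mB|-1)$ for nonempty $\mA, \mB \subseteq \ZZ_p$; combined with the equality case $|2\mA| = 2|\mA|-1$, this puts us in the extremal regime of Cauchy--Davenport. The classical approach is then via the \emph{$e$-transform}: for $e \in \ZZ_p$, replace the pair $(\mA, \mB)$ by $(\mA \cup (\mB+e), \mB \cap (\mA-e))$, which does not increase $|\mA+\mB|$ and preserves $|\mA| + |\mB|$ when the intersection is nonempty. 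I would set $\mB = \mA$ and argue that if $\mA$ is not an arithmetic progression, one can choose transforms to strictly enlarge $\mA$ while keeping the sumset of size $2|\mA|-1$, until reaching a contradiction with $|2\mA| \leq p-2$ (the process would eventually force $|\mA| + |\mB| > p$, making the sumset all of $\ZZ_p$).

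More concretely, the key combinatorial step is: if $|2\mA| = 2|\mA|-1$, $2 \leq |\mA|$, and $|2\mA| \leq p-2$, then for every $e$ the transformed set $\mA_e = \mA \cup (\mA + e)$ together with $\mB_e = \mA \cap (\mA - e)$ still satisfies $|\mA_e + \mB_e| \leq |\mA_e| + |\mB_e| - 1$, hence equals $2|\mA|-1$, provided $\mB_e \neq \emptyset$. One shows that if $\mA$ is not an AP then there exists $e$ with $\emptyset \neq \mB_e \subsetneq \mA$ and $\mA_e \supsetneq \mA$, so $|\mA_e| > |\mA|$ while $|\mB_e| < |\mA|$. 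Iterating (with a symmetric argument to also grow $\mB$), one drives $|\mA_e|$ up past $(p+1)/2$ while the sumset stays at $2|\mA|-1$, so eventually $|\mA_e| + |\mB_e| - 1 \geq p$ forces the sumset to be everything — but it still has size $2|\mA|-1 \leq p-2$, a contradiction. Tracking the invariant $|\mA_e| + |\mB_e|$ and the non-AP property through the transforms requires care.

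The main obstacle is controlling the $e$-transform process: one must guarantee at each stage both that the intersection $\mB_e$ stays nonempty (so the invariant is preserved) and that some transform genuinely enlarges $\mA$ as long as $\mA$ fails to be an arithmetic progression. Showing the existence of such a "good" $e$ is the heart of the matter, and is where the hypothesis $|2\mA| \leq p - 2$ (equivalently, $|\mA| + |-\mA^c|$-type counting arguments ruling out full coverage) gets used. An alternative I would keep in mind is the Fourier-analytic route or the Freiman $3|\mA|-4$ circle of ideas, but for the sharp threshold $2|\mA|-1$ the transform argument is cleanest and most classical, so I would commit to it. Once the iteration terminates, the conclusion is that $\mA$ must have been an arithmetic progression to begin with.
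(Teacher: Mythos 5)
A preliminary remark: the paper does not prove this statement at all — it is quoted from Vosper's 1956 paper as a known benchmark for Conjecture~\ref{conj:main} — so your proposal has to stand on its own, and as written it has a genuine gap in the forward direction. The fatal point is your termination mechanism. The $e$-transform satisfies $|\mA_e| + |\mB_e| = |\mA| + |\mB|$ \emph{identically}: by inclusion--exclusion, $|\mA \cup (\mB+e)| = |\mA| + |\mB| - |\mA \cap (\mB+e)|$ and $|\mB \cap (\mA-e)| = |\mA \cap (\mB+e)|$, so the sum is conserved at every step, not merely ``preserved when the intersection is nonempty.'' Starting from $\mB = \mA$, the quantity $|\mA_e| + |\mB_e| - 1$ is therefore frozen at $2|\mA| - 1 \leq p-2$ throughout any sequence of transforms; it can never reach $p$, so the contradiction you aim for (``eventually $|\mA_e| + |\mB_e| - 1 \geq p$ forces the sumset to be everything'') is unreachable. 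Enlarging $\mA_e$ shrinks $\mB_e$ by exactly the same amount, and no ``symmetric argument to also grow $\mB$'' can evade a conserved quantity.

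Beyond that, the step you explicitly defer — producing a ``good'' $e$ whenever $\mA$ is not an arithmetic progression, and controlling the degenerate transforms where $\mB_e = \mB$ or $|\mB_e| \leq 1$ — is precisely the substance of Vosper's theorem; with it postponed, what remains is essentially only Cauchy--Davenport. The classical transform proof also runs in the opposite direction from your sketch: one works with the asymmetric two-set statement (which the transforms force on you anyway) and inducts \emph{downward} on $|\mB|$, shrinking the second set to $|\mB| = 2$, where $|\mA + \mB| = |\mA| + 1$ elementarily forces $\mA$ to be a progression with difference determined by $\mB$; the real work is the case analysis when no transform strictly decreases $|\mB|$. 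That is also where the hypothesis $|2\mA| \leq p-2$ earns its keep: it excludes genuine exceptions such as $|\mA| = (p+1)/2$, where Cauchy--Davenport already gives $2\mA = \ZZ_p$, so $|2\mA| = p = 2|\mA| - 1$ with $\mA$ completely arbitrary (and, in the two-set form, the critical pairs with $|\mA+\mB| = p-1$ given by $\mB = c - (\ZZ_p \setminus \mA)$). Your sketch gestures at this but never pins down where the hypothesis acts. As it stands, the proposal is a correct statement of the strategy's ingredients with the central lemma missing and the concluding contradiction resting on a quantity that cannot change; it would need to be reorganized along the induction just described (or replaced by a rectification/Fourier argument) to become a proof.
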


The conjecture stated in~\cite{SZ09} has the advantage of being such a generalization, but unfortunately it does not hold in its stated form. On the other hand, we believe the conjecture stated in~\cite{HSZ05, G13, CR17} to be true, but it does not include the result of Vosper. We therefore propose the following combined version of the conjecture.
\begin{conjecture} \label{conj:main}
	Let a set $\mA \subset \ZZ_p$ be given. If either
	\begin{enumerate}
		\item[(i)]	$0 \leq |2\mA| - \big(2|\mA| - 1\big) \leq \min(|\mA| - 4, p - |2\mA| - 2)$ or 
		\item[(ii)]	$0 \leq |2\mA| - \big(2|\mA| - 1\big) = |\mA| - 3 \leq p - |2\mA| - 3$
	\end{enumerate}
	then $\mA$ can be covered by an arithmetic progression of length at most $|2\mA| - |\mA| + 1$.
\end{conjecture}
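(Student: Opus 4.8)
\medskip
\noindent\textbf{A possible line of attack.} The plan is to prove Conjecture~\ref{conj:main} by treating two essentially disjoint regimes, according to how large $|\mA|$ is compared with $p$, and to handle the two extremal faces of the statement --- where $|2\mA| - (2|\mA|-1)$ equals $0$ or $|\mA|-3$ --- as separate rigidity problems. In the regime where $|\mA|$ is small relative to $p$, I would run the Fourier-analytic rectification of Section~\ref{sec:preliminaries} to its natural limit: rectify the largest segment $\mA_1$ of a dilate of $\mA$ using the large Fourier coefficient supplied by Lemma~\ref{lemma:LargeFourierCoefficient} and Theorem~\ref{thm:FreimanRectification}, and then glue back the complement exactly as in the proof of Theorem~\ref{thm:main}. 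This should push the admissible doubling all the way to $|2\mA| \le 3|\mA|-4$, at which point the integer $3|A|-4$ theorem applied to the fully rectified set is itself tight and yields a covering progression of length $|2\mA|-|\mA|+1$ --- giving clause (ii) and the endpoint of clause (i). What this requires, and where the first real work lies, is a strengthening of Proposition~\ref{prop:4k-8_partial}: the constant $10^9$ must be brought down close to the value $4$ conjectured by Freiman, and the two-dimensional case in the proof of Theorem~\ref{thm:main} (Case~2) must be carried out with correspondingly sharper input from Theorems~\ref{thm:dimension_min} and~\ref{thm:3.3k}, so that no loss is incurred when the rectified part has doubling in the range between $3|\mA|-4$ and the crude $3.04$-threshold used here.

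The genuinely hard regime is when $|\mA|$ is a positive proportion of $p$, because then the rectification step is useless: concentrating even a fixed fraction of $\mA$ into an interval of length $p/2$ would force $|\widehat{\mathds{1}}_{\mA}(d)|$ to be a fixed proportion of $|\mA|$, which Lemma~\ref{lemma:LargeFourierCoefficient} cannot provide once $|\mA|/p$ is bounded below --- the ``inherent loss in the density'' noted in Section~\ref{sec:remarks}. Here I would argue intrinsically in $\ZZ_p$, following the route of Serra and Z{\'e}mor~\cite{SZ09}: form the Cayley-type graph with connecting set $\mA$ and run Hamidoune's isoperimetric method (atoms and fragments) to locate a long arithmetic-progression structure directly, then use the small doubling to pin it down to a progression of the claimed length. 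The main obstacle of the whole conjecture is concentrated in this step: the isoperimetric arguments currently reach only $|2\mA| \le (2+10^{-4})|\mA|$, whereas one must push the relevant parameter up to essentially $\varepsilon = 1$, i.e.\ all the way to doubling $3|\mA|-4$ --- a substantial and, as far as I know, open strengthening of the structure theory near the Vosper boundary in $\ZZ_p$.

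Finally, the two extremal faces should be handled as stability statements interpolating between Vosper's theorem and Freiman's $3|A|-4$ description. On the face $|2\mA| = 2|\mA|-1$ with $|2\mA| \le p-2$ the conclusion is precisely Vosper's theorem quoted above, so that case is already in hand; the task is to show the covering stays tight and $\mA$ stays one-dimensional as $|2\mA| - |\mA|$ grows across the intermediate range up to the other endpoint $|2\mA| = 3|\mA|-4$. Throughout, the conditions $|2\mA| - 2|\mA| + 1 \le p - |2\mA| - 2$ (respectively $\le p - |2\mA| - 3$) should be tracked in place of the cruder hypothesis $|\mA| < p/10^{10}$ of Theorem~\ref{thm:main}: they keep $|2\mA|$ bounded away from $p$, so that $\ZZ_p \setminus 2\mA$ remains a genuine ``defect'' against which both the isoperimetric and the rectification arguments can be run, and keeping precise track of how much room these inequalities leave is where the bookkeeping will be delicate. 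In one sentence: the crux is removing the density hypothesis entirely, which forces the rectification step to be replaced by a $\ZZ_p$-intrinsic structural argument valid up to doubling $3|\mA|-4$.
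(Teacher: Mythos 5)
This statement is Conjecture~\ref{conj:main}: the paper does not prove it, and neither do you. The paper's only support for it consists of two examples showing tightness and a corollary derived \emph{assuming} the conjecture; there is no argument to compare yours against. What you have written is a research programme rather than a proof, and its two pillars are both openly unresolved. In the sparse regime you need Proposition~\ref{prop:4k-8_partial} with the constant $10^9$ replaced by something close to $4$; that is precisely Freiman's conjecture cited in the introduction, of which only the $3|A|-3$ case and Jin's $(3+\varepsilon)|A|$ result are known, so this step is a standing open problem, not a lemma you can invoke. Moreover, even granting it, the gluing argument of Case~2 of the proof of Theorem~\ref{thm:main} uses the numerical slack between $2.48$ and $3.04$ (e.g.\ the lower bound $3.5|\mA_1|-3 > 2.48|\mA|-7$ and the bound $|\mR|\geq 0.17|\mA|$); at doubling $3|\mA|-4$ that slack vanishes and the case analysis does not close without a genuinely new idea, so ``running the rectification to its natural limit'' is not a routine extension.

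In the dense regime you concede the decisive gap yourself: the isoperimetric/atom method of Serra and Z\'emor~\cite{SZ09} is only known to work for $|2\mA|\leq(2+10^{-4})|\mA|$, and you would need it up to $3|\mA|-4$ under the sole hypothesis $|2\mA|\leq p-|\mA|-2$ (resp.\ $-3$). Pushing the Vosper-boundary structure theory across that entire range is exactly the content of the conjecture, so your plan is circular at its core: the hard case is delegated to an unproved strengthening of the very tools whose limitations motivate the conjecture. Identifying the obstacles accurately has value as a roadmap, but no step of the argument is actually carried out, and the statement remains open both in the paper and in your proposal.
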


Let us provide two examples which show that this statement, if true, is tight. The first example proves the need for the requirement $|2\mA| - \big(2|\mA| - 1\big) \leq p - |2\mA| - 2$ in case~(i) and the second example proves that the case $|2\mA| - \big(2|\mA| - 1\big) = |\mA| - 3$ needs to be handled separately.

\begin{example}[Serra and Z{\'e}mor~\cite{SZ09}]
	Given $k \geq 2$ and $0 \leq x \leq k-3$ let $p = 2k + 2x - 1$ be prime and $\mA = \{0\} \cup \{x+2,x+3,...,(p+1)/2\} \subset \ZZ_p$ so that $|\mA| = k$. We have $$2\mA = \{x+2,\dots,p-1,0,1\}= \ZZ_p \setminus \{2,\dots,x+1\}$$ so that $|2\mA| = 2|\mA| - 1 + x = p - x$. Clearly $\mA$ cannot be covered by an arithmetic progression of length at most $|\mA| + x = |2\mA| - |\mA| + 1$.
\end{example}

To see that this example not only implies the requirement  $|2\mA| - \big(2|\mA| - 1\big) \leq p - |2\mA| - 1$ but also  $|2\mA| - \big(2|\mA| - 1\big) \leq p - |2\mA| - 2$, note that $|2\mA| - \big(2|\mA| - 1\big) = p - |2\mA| - 1$ would imply that the prime $p$ is even.

\begin{example}
	Take a prime $p = 4t - 1$. Let $\mA = \{0,\dots,t\} \setminus \{t-1\} \cup \{2t\}$, that is $|\mA| = t+1$. We have $$2\mA = \{0,\dots,2t-2\} \cup \{2t,\dots,3t-2\} \cup \{3t\} = \ZZ_p \setminus \{2t-1,3t-1,3t+1,\dots, 4t-2\}$$ so that $|2\mA| = 3t - 1 = 3|\mA| - 4 = 2|\mA| - 1 + x = p - t = p - (x+2)$ where $x = |\mA| - 3$. Clearly $\mA$ cannot be covered by an arithmetic progression of length at most $|\mA| + x = |2\mA| - |\mA| + 1$.
\end{example}

Besides satisfying these two examples as well as implying the full strength of Vosper's Theorem in the case $|2\mA| - \big(2|\mA| - 1\big) = 0$, Conjecture~\ref{conj:main} would also imply the following part of a conjecture of Freiman~\cite{FS16} in the integers, giving perhaps some additional intuition as to it's slightly unusual shape.

\begin{corollary}
	Assume that Conjecture~\ref{conj:main} holds and let $A \subset \ZZ$ be a $1$-dimensional set in normal form for which $\max(A)$ is prime. If $|2A| = 3|A| - 4 + b \leq 4|A|-8$, then $A$ can be covered by an arithmetic progression of length at most $2(|A|+b-2) + 1$. If $|2A| = 4|A| - 7$, then $A$ can be covered by an arithmetic progression of length at most $4|A|-7$.
\end{corollary}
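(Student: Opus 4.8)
The plan is to reduce $A$ modulo $p := \max(A)$ and bring Conjecture~\ref{conj:main} to bear on the projection, in the spirit of the modular reduction used in the proof of Proposition~\ref{prop:4k-8_partial}. Set $m := |A| - 1$ and let $\mA \subset \ZZ_p$ be the image of $A$ under the canonical projection $\varphi$. Since $A$ is in normal form, $0$ and $\max(A) = p$ are the only elements of $A$ mapping to $0 \bmod p$, so $|\mA| = m$. Writing $B = \{x \in 2A : x + p \in 2A\}$ one has $A \subseteq B$ (as $0, p \in A$) and $|2A| = |2\mA| + |B|$, whence $|2\mA| \le |2A| - |A|$; thus $|2\mA| \le 2m - 2 + b$ in the first case, and $|2\mA| \le 3m - 4$ in the second. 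Note also $|A| \ge 3$, since $\max(A)$ is prime.

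I would then split according to the position of $|2\mA|$ relative to $2m - 1$ and to $p$. If $|2\mA| < 2m - 1$, then Cauchy--Davenport forces $p \le |2\mA| < 2m - 1 = 2|A| - 3$, so $A \subseteq [0, p]$ already lies in an interval of length at most $2|A| - 3$, which is below both target lengths, and we are done. If $|2\mA| \ge 2m - 1$, the bounds on $|2\mA|$ (together with $b \le |A| - 4$ in the first case) show that the first two hypotheses of the appropriate part of Conjecture~\ref{conj:main} hold --- part~(i), except when $|2\mA|$ attains its extreme value $3m - 4$ in the second case, where one invokes part~(ii). The remaining ``$|2\mA|$ not too close to $p$'' hypothesis either holds or fails; if it fails, an elementary rearrangement bounds $p$ from above (for instance $p \le 2|2\mA| - 2m + 2 \le 2m - 2 + 2b$ in the first case) and once more places $A$ inside a short enough interval.

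The crux is the remaining case, where all hypotheses of Conjecture~\ref{conj:main} are met. The conjecture then places $\mA$ inside an arithmetic progression $P \subset \ZZ_p$ of length $\ell \le |2\mA| - |\mA| + 1$, and precisely the ``not too close to $p$'' hypothesis we assumed yields $2\ell - 2 < p$. Since $p$ is prime the common difference of $P$ is invertible, so $P$ is $F_2$-isomorphic via an affine map to the integer interval $\{0, 1, \dots, \ell - 1\}$ --- the inequality $2\ell - 2 < p$ guaranteeing that no depth-$2$ relation wraps around --- hence $P$, and therefore $\mA$, is rectifiable. But then Lemma~\ref{lemma:higherdim}, applied with modulus $p = \max(A)$, gives $\dim(A) \ge 2$, contradicting the hypothesis that $A$ is $1$-dimensional. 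So this case cannot occur, and in every remaining case $A$ is contained in an interval of length at most $2(|A| + b - 2) + 1$, respectively $4|A| - 7$, as required.

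\textbf{Main obstacle.} The main obstacle is the bookkeeping: one must check in each branch that the interval length coming from Cauchy--Davenport or from the failure of the density hypothesis is genuinely at most the prescribed bound, and one must verify that whenever Conjecture~\ref{conj:main} applies its covering progression satisfies $2\ell - 2 < p$, so that the rectification step goes through. The one idea beyond the pure modular-reduction template is to use the Freiman dimension of $A$, via Lemma~\ref{lemma:higherdim}, to eliminate the ``conjecture applies'' branch; consequently Conjecture~\ref{conj:main} ends up being invoked only to rule out a configuration rather than to directly produce the covering.
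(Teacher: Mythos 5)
Your proposal is correct and follows essentially the same route as the paper: reduce modulo $\max(A)$ so that $|2\mA|\leq|2A|-|A|$, invoke case~(i) (or~(ii) in the extremal case) of Conjecture~\ref{conj:main}, and observe that if the density hypothesis holds the resulting covering progression of length at most $|\mA|+x\leq(p+1)/2$ rectifies $\mA$ and so contradicts $1$-dimensionality via Lemma~\ref{lemma:higherdim}, forcing $\max(A)$ to be small enough that the trivial interval $[0,\max(A)]$ gives the stated bound. Your explicit Cauchy--Davenport branch for $|2\mA|<2|\mA|-1$ is a minor tidying of an edge case the paper leaves implicit, not a different argument.
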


\begin{proof}
	Let $\mA$ denote the canonical embedding of $A$ into $\ZZ_{\max(A)}$. We start with the case $|2A| \leq 4|A|-8$ and note as in the proof of Proposition~\ref{prop:4k-8_partial} that $$|2\mA| \leq |2A| - |A| \leq 2|\mA| - 1 + (b-1) \leq 3|\mA| - 5.$$ Setting $x = |2\mA| - (2|\mA|-1) \leq b-1 \leq |\mA| - 4$ it would follow from case~(i) of Conjecture~\ref{conj:main} that either $|2\mA| > \max(A) - (x+2)$ and therefore we get the desired covering property for $A$, or that $\mA$ can be contained in an arithmetic progression of length at most $|\mA|+x \leq (\max(A)+1)/2$, implying that $\mA$ is rectifiable and therefore by Lemma~\ref{lemma:higherdim} contradicting the requirement that $A$ is $1$-dimensional.
	
	Now if $|2A| = 4|A| - 7$ then $x = |2\mA| - (2|\mA|-1) \leq b-1 \leq |\mA| - 3$ and we again either get the desired covering property from Conjecture~\ref{conj:main}, or we get a contradiction to the requirement that $A$ is $1$-dimensional.
\end{proof}

Note that this proof is essentially the same as that of Proposition~\ref{prop:4k-8_partial}. The requirement that $\max(A)$ is prime does not appear in Freiman's original conjecture and is artificial, but the corollary gives an indication of the relationship between Conjecture~\ref{conj:main} and the mentioned conjecture by Freiman. In fact, the bounds given in the statement would imply that $\max(A)$ is even in the extremal case. To prove such a statement without that condition one would require an analogue of Conjecture~\ref{conj:main} in general $\ZZ_n$, that is a strengthening of the results of Kemperman~\cite{K60} or Deshouillers and Freiman~\cite{DF03}. To our knowledge, such a conjecture has not been explicitly formulated and might in fact be very intricate to state.

\bibliography{bib}
\bibliographystyle{abbrv}

\end{document}